\newtheorem{thm}{Theorem}[section]
\newtheorem{lem}[thm]{Lemma}
\newtheorem{prop}[thm]{Proposition}
\newtheorem{cor}[thm]{Corollary}
\newtheorem{rmk}[thm]{Remark}
\numberwithin{equation}{section}
\newcommand{\vpi}{\varphi}
\newcommand{\p}{\partial}
\newcommand{\reff}[1]{(\ref{#1})}
\newcommand{\norm}[1]{\left\Vert#1\right\Vert}
\newcommand{\com}[1]{\big[#1\big]}
\newcommand{\set}[1]{\left\{#1\right\}}
\newcommand{\inner}[1]{\left(#1\right)}
\newcommand{\abs}[1]{\left\vert#1\right\vert}
\newcommand\cs{{\mathcal S}}
\newcommand\cm{{\mathcal M}}
\newcommand\cl{{\mathcal L}}
\newcommand{\V}{\,\,\big|\,\,\,}
\newcommand\nn{{\mathbb N}}
\newcommand\rr{{{\mathbb R}}}
\newcommand\zz{{\mathbb Z}}
\begin{document}

\title[Gevrey regularity]{Gevrey regularity of subelliptic Monge-Amp\`{e}re
equations in the plane}

\author[H. Chen, W.-X. Li, and C.-J. Xu]{Hua CHEN, Wei-Xi LI \and
Chao-Jiang XU}

\address{School of Mathematics and Statistics, Wuhan University,
Wuhan 430072, China} \email{chenhua@whu.edu.cn}

\address{School of Mathematics and Statistics, Wuhan University,
Wuhan 430072, China} \email{wei-xi.li@whu.edu.cn}

\address{ School of Mathematics and Statistics, Wuhan University, Wuhan
430072, China
\newline\indent and
\newline\indent
Universit\'e de Rouen, UMR6085-CNRS-Math\'ematiques,
Avenue de l'Universit\'e, BP.12,
\newline\indent
76801 Saint Etienne du Rouvray, France} \email{Chao-Jiang.Xu@univ-rouen.fr}

\thanks{This work is partially  supported by the NSFC}

\date{September 09 2009}
\subjclass[2000]{35B65, 35H20}

\keywords{Monge-Amp\`{e}re equations, Gevrey regularity, subellipticity}

\begin{abstract}
  In this paper, we establish the Gevrey regularity of solutions for a class of
  degenerate Monge-Amp\`{e}re equations in the plane, under the
  assumption that one principle entry of the Hessian is strictly positive
  and an appropriately finite type degeneracy.
\end{abstract}

\maketitle

\section{Introduction}

In this paper, we study the regularity problem for the real
Monge-Amp\`{e}re equation
\begin{equation}\label{MA-0}
\det \left(D^2 u \right)=k(x),\quad x\in\Omega\subset\rr^d,
\end{equation}
where $\Omega$ is an open domain of $\rr^d, d\geq 2$. We consider
the convex solution $u$ of equation \reff{MA-0}, then $k$ is a
nonnegative function. In the case when $k>0,$ the equation
\reff{MA} is elliptic and the theory is well developed. For
instance, it's shown in \cite{MR739925} that there exists a unique
solution $u$ to the Dirichlet problem for \reff{MA-0}, smooth up to
the boundary of $\Omega,$ provided that $k$ is smooth and the
boundary $\p\Omega$ of $\Omega$ is strictly convex. In the
degenerate case, i.e.,
$$
\Sigma_k=\left\{x\in \Omega; k(x)=0, \nabla
k(x)=0\right\}\not=\emptyset.
$$
The equation \reff{MA-0} is then a full nonlinear degenerate
elliptic equation. The existence and uniqueness of the solution for
the Dirichlet problem of the equation \reff{MA-0} have already been
studied in \cite{MR1687172}. Also in \cite{HZ}, they proved that the
Monge-Amp\`{e}re equation has a $C^\infty$ convex local solution if
the order of degenerate point for the smooth coefficient $k$ is
finite.

As far as the regularity problem is concerned, a result in \cite{Z}
proved that, for the degenerate Monge-Amp\`{e}re equation, if the
solution $u\in C^\rho$ for $\rho>4$ (so that it is a classical
solution), then $u$ will be $C^\infty$ smooth.

However, in general, the convex solution $u$ to \reff{MA-0} is at
most in $C^{1,1}$ if $k$ is only smooth and nonnegative (see \cite
{MR1430436} for example). To get a higher regularity, some extra
assumptions are needed to impose on $k.$ This problem has been
studied by P. Guan \cite{MR1488238} in two dimension case, in which
the smoothness of $C^\infty$ for a $C^{1,1}$ solution $u$ of the
equation \reff{MA-0} is obtained, if $k$ vanishes in finite order,
i.e. $k\approx x^{2\ell}+A y^{2n}$ with $\ell\leq n, A\geq0$, and
one principal curvature of $u$ is strictly positive. In a recent
paper \cite{guan-sawyer1}, the last assumption is relaxed to the
bounding of trace of Hessian from below, i.e., $\triangle u\geq
c_0>0$. For such $C^\infty$ regularity problem, see also earlier
work of C.-J. Xu \cite{MR864418} which is concerned with the
$C^\infty$ regularity for general two-dimensional degenerate
elliptic equation. In a recent work \cite{MR2137289}, the authors
extended Guan's two-dimensional result of \cite{MR1488238} to higher
dimensional case.

It is natural to ask that, in the degenerate case, would it be the
best possible for the regularity of solution here to be $C^\infty$
smooth? One may expect that, in case of coefficient $k$ with higher
regularity, the solution $u$ would have better regularity than
$C^\infty$ smooth. we will introduce the Gevrey class, an
intermediate space between the spaces of the analytic functions and
the $C^\infty$ functions. There is well-developed theory on the
Gevrey regularity (see the definition later) for nonlinear elliptic
equations of any order, see \cite{Friedman58} for instance. For the
linear degenerate elliptic problem, there have been many works on
the Gevrey hypoellipticity of linear subelliptic operators of second
order (e.g. \cite{DerridjZuily73-2,Durand78} and the reference
therein). The difficulty concerned with equation \reff{MA-0} lies on
the mixture of degeneracy and nonlinearity.

In this paper, we attempt to explore the regularity of solutions of
equation \reff{MA-0} in the frame of Gevrey class. We study the
problem in two dimension case
\begin{equation}\label{MA}
 u_{xx} u_{yy}-u_{xy}^2=k(x,y),\quad (x,y)\in\Omega,
\end{equation}
and assume that $u_{yy}>0$, then we can apply the classic partial
Legendre transformation (see \cite{MR1079936} for instance), to
translate the equation \reff{MA} to the following divergence form
quasi-linear equation
\begin{equation}\label{pre}
   \p_{ss}w(s,t)+\p_t\set{k(s,w(s,t))\p_t w(s,t)}=0.
\end{equation}
This quasi-linearity allows us to adopt the idea used in
\cite{CLXJ}, to obtain the Gevrey regularity for the above
divergence form equation. In order to go back to the original
problem, i.e., the Gevrey regularity for the equation \reff{MA}, a
key point would be to show that the Gevrey regularity is invariant
under the partial Legendre transformation, which will be proved in
Section \ref{0811032}.

Now let us recall the definition of the space of Gevrey class
functions, which is denoted by $G^\sigma(U)$, for $\sigma\geq
1,$ with $U$ an open subset of $\mathbb{R}^d$ and $\sigma$
being called Gevrey index. We say that $f\in G^{\sigma}(U)$
if $f\in C^\infty(U)$ and for any compact subset $K$ of
$U$, there exists a constant $C_K$, depending only on $K$,
such that for all multi-indices $\alpha\in\mathbb{Z}_+^d$,
\begin{equation*}
\|\partial^\alpha{f}\|_{L^\infty(K)} \leq C_K^{|\alpha|+1}(\abs\alpha!)^{\sigma}.
\end{equation*}
The constant $C_K$ here is called the Gevrey constant of $f.$  We
remark that the above inequality is equivalent to the following
condition:
$$
\|\partial^\alpha{f}\|_{L^2(K)}\leq C_K^{|\alpha|+1}(|\alpha|!)^{{\sigma}}.
$$
In this paper, both estimates above will be used. Observe that
$G^1(U)$ is the space of real analytic functions in $U$.

We state now our main result as follows, where $\Omega$ is an open
neighborhood of origin in $\rr^2$.

\begin{thm}\label{main}
  Let $u$ be a $C^{1,1}$ weak convex solution to the Monge-Amp\`{e}re
  equation \reff{MA}. Suppose that  $u_{yy}\geq c_0>0$ in
  $\Omega$ and that  $k(x,y)$ is a smooth
  function defined in $\Omega$, satisfying
  \begin{equation}\label{sub+}
  c^{-1}(x^{2\ell}+A\, y^{2n})\leq k(x, y)\leq c( x^{2\ell}+ A\,
  y^{2n}),\quad\quad (x, y)\in \Omega
  \end{equation}
  where $c>0, A\geq 0$ and $\ell\leq n$ are two nonnegative
  integers. Then $u\in G^{\ell+1}(\Omega),$
  provided $k\in G^{\ell+1}(\Omega).$
\end{thm}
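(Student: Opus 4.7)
The plan is to follow the two-step reduction hinted at in the introduction: transfer the fully nonlinear problem \reff{MA} to the quasi-linear divergence form equation \reff{pre} via the partial Legendre transformation, prove Gevrey regularity for \reff{pre} by a nonlinear subelliptic iteration in the spirit of \cite{CLXJ}, and finally conclude via the Gevrey invariance of the partial Legendre transformation proved in Section \ref{0811032}.

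First I would set up the partial Legendre transformation $(x,y)\mapsto(s,t):=(x,u_y(x,y))$. Since $u_{yy}\ge c_0>0$, this map is a bi-Lipschitz diffeomorphism near the origin, with inverse written as $y=\psi(s,t)$. Setting $w(s,t)=t\psi(s,t)-u(s,\psi(s,t))$, one obtains the classical identities $w_s=-u_x$, $w_{tt}=1/u_{yy}$ and $w_{ss}w_{tt}-w_{st}^2=-k(s,\psi(s,t))/u_{yy}^2$, from which a direct rearrangement produces \reff{pre} with coefficient $\tilde k(s,t):=k(s,\psi(s,t))$. Assumption \reff{sub+} then forces $\tilde k$ to vanish to finite order on the preimage of the zero set of $k$, so the linearized operator $\p_{ss}+\p_t(\tilde k\,\p_t\cdot)$ is subelliptic of H\"ormander type, with a subelliptic gain governed by $\ell$.

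The core analytical step is to establish $w\in G^{\ell+1}$. I would run a nonlinear Kotake--Narasimhan iteration adapted to the quasi-linear structure of \reff{pre}. The induction hypothesis has the form $\|\p^\alpha w\|_{L^2(K)}\le C^{|\alpha|+1}(|\alpha|!)^{\ell+1}$ on every compact $K$ in the $(s,t)$ coordinates, for all multi-indices up to a given length. The inductive step applies $\p^\alpha$ to \reff{pre}, invokes a sharp subelliptic $L^2$ estimate for the linearized operator with coefficient $\tilde k(s,w)$ (whose gain of $1/(\ell+1)$ derivatives is precisely calibrated to the targeted Gevrey index), and treats the commutators $\com{\p^\alpha,\tilde k(s,w)\p_t}$ by a Fa\`a di Bruno expansion: factors involving at most $|\alpha|-1$ derivatives of $w$ are bounded by the induction hypothesis, while the highest-order term is absorbed using the subelliptic gain. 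The hypothesis $k\in G^{\ell+1}$ supplies the factorial growth of the outer derivatives of $k$, and composition with $w\in G^{\ell+1}$ preserves the index.

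Finally, the Gevrey invariance established in Section \ref{0811032} upgrades $w\in G^{\ell+1}$ to $u\in G^{\ell+1}(\Omega)$, via $u_y=t$, $u_x=-w_s$ together with the $G^{\ell+1}$ regularity of the change of variables $y=\psi(s,t)$, which in turn follows from $w_{tt}=1/u_{yy}$ being bounded away from zero and standard composition estimates for Gevrey functions. The hardest part will be the nonlinear iteration in the previous step: because the leading coefficient $\tilde k(s,w)$ depends on $w$ itself, every differentiation reintroduces high-order $w$-derivatives into the coefficient, and one must organize the commutator bookkeeping, the nested cutoff functions, and the multi-index combinatorics so that the factorial loss per derivative stays at exactly $\ell+1$ and is neither inflated by the nonlinearity nor by the subelliptic loss.
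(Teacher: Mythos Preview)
Your overall strategy --- reduce via the partial Legendre transformation, prove $G^{\ell+1}$ regularity for the resulting quasi-linear equation by a subelliptic iteration, then transfer back --- is exactly the paper's approach. However, your execution of the Legendre step contains an error that derails the reduction as written. The function you introduce, $w(s,t)=t\psi(s,t)-u(s,\psi(s,t))$, is the partial Legendre dual; it satisfies $w_t=\psi$, $w_{tt}=1/u_{yy}$ and $w_{ss}=-k(s,\psi)/u_{yy}$, so its equation is $w_{ss}+k(s,w_t)\,w_{tt}=0$. This is \emph{not} equation \reff{pre}: the coefficient depends on $w_t$ rather than on $w$, so the equation is not in the divergence form $\partial_{ss}w+\partial_t\big(k(s,w)\,\partial_t w\big)=0$ that your iteration (and the paper's Theorem \ref{08111508}) requires. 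Your Hessian identity $w_{ss}w_{tt}-w_{st}^2=-k/u_{yy}^2$ is also incorrect; a direct computation gives $-u_{xx}/u_{yy}$ on the right-hand side.

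The fix is simple and is precisely what the paper does: take the inverse function $y=\psi(s,t)$ itself as the unknown. From $y_s=-u_{xy}/u_{yy}$ and $y_t=1/u_{yy}$ one checks (as in \cite{MR1488238}) that $\psi$ satisfies $\partial_{ss}\psi+\partial_t\big(k(s,\psi)\,\partial_t\psi\big)=0$, which is exactly \reff{pre} with $w=\psi$. With this correction the rest of your plan --- the subelliptic estimate with gain $1/(\ell+1)$, the Fa\`a di Bruno bookkeeping for the nonlinear commutators, and the Gevrey stability of the inverse transformation --- lines up with the paper's argument, which organizes the iteration through a family of nested cutoffs $\varphi_{\rho,m}$ and a double induction on the order $m$ and a fractional index $j\in\{0,\ldots,\ell+1\}$ (Proposition \ref{0810272}) rather than a single Kotake--Narasimhan loop.
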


\begin{rmk}
  If $k$ is $C^\infty$ smooth and satisfies the condition \reff{sub+}, and
  $u_{y y}>0$, P. Guan \cite{MR1488238} has proved that a $C^{1,1}$
   solution of the equation \reff{MA} will be $C^\infty$ smooth.
  In \cite{guan-sawyer1}, the assumption {that $u_{yy}>0$}
  is relaxed to the bounding
  of trace of Hessian from below, i.e., $\triangle u\geq c_0>0$, but the assumption
  \reff{sub+} is changed to $A>0$ and $\ell=n$. Our main contribution here
  is to obtain the Gevrey regularity $G^{\ell+1}$.
\end{rmk}

\begin{rmk}
  The regularity result of main theorem seems the best possible,
  since in the particular case of $\ell=0,$
  we have $u\in G^1(\Omega)$ (i.e., the solution is analytic in $\Omega$),
  and in this case the equation \reff{MA} is elliptic, thus our result
  coincides with the well-known analytic regularity result for nonlinear
  elliptic equations. We can also {justify that} if $k$ is
  independent of second variable $y$ (then $A=0$), the equation \reff{pre} is linear,
  it is known that, see \cite{DerridjZuily73-2}, the optimal regularity
  result is that { the solution lies} in $G^{\ell+1}$.
\end{rmk}

\begin{rmk}
The extension of above result to higher dimensional cases and more
general models of the Monge-Am\`{e}re equations with $k=k(x,u,Du)$
is our coming work. By using the results of \cite{MR2137289}, the
idea is the same.
\end{rmk}

The paper is organized as follows: the section \ref{0811031} is
devoted to proving the Gevrey regularity for the quasi-linear
equation \reff{pre}. In Section \ref{0811032} we prove our main
result by virtue of the classic partial Legendre transformation. We
prove the technical  lemmas in Section \ref{lemmas}.


\section{Gevrey regularity of quasi-linear subelliptic equations}
\label{0811031}

In this section we study the Gevrey regularity of solutions for the
following quasi-linear equation near the origin of $\rr^2$
\begin{equation}\label{quasi}
  \p_{ss}w+\p_t\inner{k(s,w)\p_tw}=0.
\end{equation}
We assume that $k(s,w)$ satisfies the condition
\begin{equation}\label{0810271}
 c^{-1}(s^{2\ell}+ A w^{2n})\leq k(s,w)\leq c( s^{2\ell}+ A w^{2n}),
\end{equation}
where $c>1, A\geq 0$ are two constants and $\ell\leq n$ are two
positive integers. Since Gevrey regularity  is a local property, we
study the problem on the unit ball in $\rr^2$,
\[
B=\set{(s,t)\V s^2+t^2<1},
\]
and denote  $W=[-1, 1]\times [-\|w\|_{L^\infty(\bar B)},
\|w\|_{L^\infty(\bar B)}]$. We prove the the following result in
this section.

\begin{thm}\label{08111508}
Suppose that $w(s,t)\in C^\infty(\bar B)$ is a solution to the
quasi-linear equation \reff{quasi}, and that $k\in G^{\ell+1}(W).$
Then $w\in G^{\ell+1}(B).$
\end{thm}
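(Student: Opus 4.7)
The plan is to establish a subelliptic a priori estimate for the linearization of \reff{quasi} around the given smooth solution $w$, and then run a Gevrey induction on the order of derivatives, using a family of cutoffs whose supports shrink as $|\alpha|$ grows. Since $w\in C^\infty(\bar B)$, the coefficient $\tilde k(s,t):=k(s,w(s,t))$ is a smooth function on $\bar B$ which, by \reff{0810271}, vanishes to finite even order: in the $s$-direction it is controlled by $s^{2\ell}$, and (when $A>0$) the $t$-direction is handled through the vanishing of $w$. The linear operator
\[
L v:=\p_{ss}v+\p_t\inner{\tilde k(s,t)\,\p_t v}
\]
is therefore subelliptic with gain $\eps=1/(\ell+1)$, and a standard integration by parts based on \reff{0810271} yields an estimate of the form $\norm{v}_{H^{\eps}}^2\leq C\bigl(\abs{\langle Lv,v\rangle}+\norm{v}_{L^2}^2\bigr)$ for $v\in C_c^\infty(B')$, $B'\Subset B$.

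For the induction, I would fix $0<r_0<1$ and, for each $\rho\in(0,r_0/2]$, choose $\chi_\rho\in C_c^\infty(B_{r_0-\rho/2})$ with $\chi_\rho\equiv 1$ on $B_{r_0-\rho}$ and $\abs{\p^\beta\chi_\rho}\leq (C|\beta|/\rho)^{|\beta|}$. The target is a constant $M$, independent of $N$, with
\[
\norm{\p^\alpha w}_{L^2(B_{r_0-\rho})}\leq M^{|\alpha|+1}\,\frac{(|\alpha|!)^{\ell+1}}{\rho^{(\ell+1)|\alpha|}},\qquad |\alpha|\leq N.
\]
Taking $\rho\sim r_0/|\alpha|$ then yields $w\in G^{\ell+1}(B_{r_0/2})$, and since $r_0<1$ is arbitrary, the theorem follows. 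The inductive step is carried out by applying the subelliptic estimate to $\chi_\rho\p^\alpha w$ and using the decomposition
\[
L(\chi_\rho\p^\alpha w)=[L,\chi_\rho]\p^\alpha w+\chi_\rho[\p^\alpha,L]w,
\]
together with $\p^\alpha(Lw)=0$. The first commutator produces derivatives of $w$ of order $\leq|\alpha|+1$ multiplied by derivatives of $\chi_\rho$; the $\rho$-scaling above is dictated by the bound $|\p^\beta \chi_\rho|\leq(C|\beta|/\rho)^{|\beta|}$. The nonlinear commutator $\chi_\rho[\p^\alpha,L]w$ produces derivatives of $\tilde k=k(s,w)$, which I would expand via Fa\`a di Bruno into sums over partitions of products of the form (derivative of $k$ evaluated at $(s,w)$)$\,\cdot\,$(products of derivatives of $w$). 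The first factor is controlled by the hypothesis $k\in G^{\ell+1}(W)$, and the second by the inductive Gevrey bound.

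The technical heart of the argument is a tame combinatorial estimate bounding such sums in terms of the inductive quantities, with constants absorbable into $M$; this is a Gagliardo--Nirenberg-type interpolation lemma adapted to the Gevrey index $\ell+1$, which I expect to be among the technical lemmas of Section \ref{lemmas}. The main obstacle is the mismatch between the full derivative cost of each Fa\`a di Bruno expansion and the merely fractional gain $\eps=1/(\ell+1)$ provided by the subelliptic estimate: the Gevrey index $\ell+1$ is precisely what compensates this loss, and closing the induction demands a careful balance of the factorials from Fa\`a di Bruno, the powers of $\rho$ from the cutoff derivatives, and the constants from the subelliptic estimate, in the spirit of the scheme developed in \cite{CLXJ}. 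The quasi-linear (divergence) structure of \reff{quasi}, as opposed to a fully nonlinear one, is what allows this scheme to be applied directly to $\p^\alpha w$ rather than to a further linearized unknown.
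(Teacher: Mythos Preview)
Your architecture is correct and matches the paper's: linearize around $w$, prove a subelliptic estimate with gain $\eps=1/(\ell+1)$, introduce a cutoff family scaled in $\rho$, write $L(\chi_\rho\p^\alpha w)=[\,L,\chi_\rho]\p^\alpha w+\chi_\rho[\p^\alpha,L]w$, and expand the nonlinear commutator by Fa\`a di Bruno together with $k\in G^{\ell+1}$. However, as written there is a genuine gap: your inductive hypothesis records only
\[
\norm{\p^\alpha w}_{L^2(B_{r_0-\rho})}\leq M^{|\alpha|+1}\,\frac{(|\alpha|!)^{\ell+1}}{\rho^{(\ell+1)|\alpha|}},
\]
and this is too coarse to close the step $N-1\to N$. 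A single application of the subelliptic estimate gains only $1/(\ell+1)$ of a derivative, while the commutator $[\,L,\chi_\rho]\p^{N-1}w$ already contains $(\p\chi_\rho)\p^N w$. To absorb this you must iterate the $\eps$-gain $\ell+1$ times, and to do that you need to carry, in the induction, control at all the intermediate fractional levels. The paper resolves this with a \emph{double} induction: the inductive quantity is
\[
\norm{\vpi_{\rho,m}\p^m w}_{2+\frac{j}{\ell+1}}
+\norm{\p_s\Lambda^{2+\frac{j-1}{\ell+1}}\vpi_{\rho,m}\p^m w}_{0}
+\norm{\tilde k^{\frac12}\p_t\Lambda^{2+\frac{j-1}{\ell+1}}\vpi_{\rho,m}\p^m w}_{0}
\]
for all $0\leq j\leq \ell+1$, and one first steps from $m=N-1$ (all $j$) to $m=N$, $j=0$, then climbs from $j=j_0$ to $j=j_0+1$ using the $H^{r}\to H^{r+\eps}$ estimate. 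Your proposal names the obstacle (``the mismatch between the full derivative cost \ldots\ and the merely fractional gain'') but does not supply this staircase mechanism; without it the induction does not close.

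A second point you should make explicit concerns the nonlinear commutator. In $[\p^\alpha,L]w$ the most dangerous term is the one with a single derivative on the coefficient, namely $(\p\tilde k)\,\p_t\p^\alpha w$ (equivalently $(\p\tilde k)\,\p_{tt}\p^{\alpha-e}w$). The Fa\`a di Bruno bookkeeping alone does not tame this: what is used is the elementary inequality $|\nabla\tilde k|^2\leq C\,\tilde k$, valid because $\tilde k\geq0$, which converts this term into one controlled by the weighted energy $\norm{\tilde k^{1/2}\p_t\Lambda^{r}\vpi_{\rho,m}\p^m w}_{0}$. That is precisely why this weighted piece is carried in the inductive quantity above. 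If you strengthen your inductive hypothesis to include the fractional scale $H^{2+j/(\ell+1)}$ and the two energy norms, and organize the proof as the paper does (inner induction on $j$, outer on $m$), your outline becomes a complete proof.
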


We recall some notations and elementary results for the Sobolev
space and pseudo-differential operators. Let $H^\kappa(\mathbb
R^{2}),\kappa\in\mathbb R,$ be the classical Sobolev space equipped
with the norm $\|\cdot\|_\kappa$. Observe
$\norm{\cdot}_0=\norm{\cdot}_{L^2(\rr^2)}$.  Recall that $
H^\kappa(\rr^2)$ is an algebra if $\kappa>1$. We need also the
interpolation inequality for Sobolev space: for any $\varepsilon>0$
and any $r_1< r_2<r_3,$
\begin{equation}\label{inter}
\|h\|_{r_2}\leq \varepsilon \|h\|_{r_3}+
\varepsilon^{-(r_2-r_1)/(r_3-r_2)}\|h\|_{r_1},\quad\forall ~h\in
H^{r_3}(\rr^2).
\end{equation}

Let $U$ be an open subset of $\rr^{2}$ and $S^a(U), a\in \rr,$ be
the symbol space of classical pseudo-differential operators. We
say $P=P(s,t,D_s,D_t)\in{\rm Op}(S^a(U))$,
 a pseudo-differential operator of
order $a,$ if its symbol $\sigma(P)(s,t;\zeta,\eta)\in S^a(U)$
 with $(\zeta,\eta)$ the dual variable of $(s,t).$  If $P\in{\rm
Op}(S^a(U))$, then $P$ is a continuous operator from
$H_{c}^\kappa(U)$ to $H_{loc}^{\kappa-a}(U)$. Here
$H_{c}^\kappa(U)$ is the subspace of $H^\kappa(\rr^2)$ consisting
of  the distributions having their compact support in $U$, and
$H_{loc}^{\kappa-a}(U)$ consists of the distributions $h$ such
that $\phi h\in H^{\kappa-a}(\rr^2)$ for any $\phi\in
C_0^\infty(U)$. For more detail on the pseudo-differential
operator, we refer to the book \cite{Treves80}. Remark that if
$P_1\in {\rm Op}(S^{a_1})$, $P_2\in {\rm Op}(S^{a_2}(U))$, then
$[P_1,~P_2]\in {\rm Op}(S^{a_1+a_2-1}(U)).$ In this paper, we
shall use the pseudo-differential operator
$\Lambda^r=\inner{1+\abs{D_s}^2+\abs{D_t}^2}^{\frac r 2}$ of order
$r, r\in\rr,$ whose symbol is given by
\[
\sigma(\Lambda^r)=\inner{1+\zeta^2+\eta^2}^{\frac r 2}.
\]
In the following discussions, we denote, for {$P\in {\rm
Op}(S^{a}),$}
\[
{\norm{P\partial^m
v}_{\kappa}=\sum_{\abs\alpha=m}\norm{P\partial^\alpha
v}_{\kappa}}\quad {\rm
and}~~\com{v}_{j,U}=\sum\limits_{\abs\gamma=j}\norm{\p^\gamma
v}_{L^\infty(U)}.
\]

We consider the following linearized operator corresponding
to \reff{quasi} and the solution $w$,
\begin{equation*}
  \cl =\p_{ss}+\p_t \big(\tilde k (s, t) \p_t\,\, \cdot\,\,\big),
\end{equation*}
where $\tilde k(s,t)=k(s,w(s,t))$. To simplify the notation, we
extended smoothly the function $\tilde k$ to $\rr^2$ by constant
outside of $\bar B$, similar for $k$. We have firstly the following
subelliptic estimate.

\begin{lem}\label{lem3.1}
Under the assumption \reff{0810271}, for any $r\in\rr$, there
exists $C_r>0$ such that
\begin{equation}\label{08110105}
  \norm{v}^2_{r+\frac{1}{\ell+1}}+\norm {\p_s \Lambda^r v}_0^2
  +\|\tilde k^{\frac{1}{2}}\p_t\Lambda^r v\|^2_0
  \leq  C_r\set{\norm{\cl v}^2_{r-\frac{1}{\ell+1}}+\|v\|^2_0},
\end{equation}
for any $v\in C_0^\infty( B)$, where $C_r$ depends only on
$\com{\tilde k}_{j, \bar B}, 0\leq j\leq 2$.
\end{lem}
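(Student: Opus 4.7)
The plan is to establish \reff{08110105} in stages, starting from the base case $r=0$ and then lifting to arbitrary $r$ via a commutator argument.

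\textbf{Stage 1 (the case $r=0$).} For $v\in C_0^\infty(B)$, an integration by parts gives the energy identity
\[
-\langle \cl v,v\rangle_{L^2}=\|\p_s v\|_0^2+\|\tilde k^{1/2}\p_t v\|_0^2 .
\]
Bounding the left-hand side by $\|\cl v\|_{-1/(\ell+1)}\|v\|_{1/(\ell+1)}$ and applying Young's inequality reduces matters to proving the subelliptic gain
\[
\|v\|_{1/(\ell+1)}^2 \le C\bigl(\|\p_s v\|_0^2+\|s^\ell\p_t v\|_0^2+\|v\|_0^2\bigr),
\]
where I used $\tilde k \ge c^{-1}s^{2\ell}$ from \reff{0810271} to replace $\tilde k^{1/2}$ by $s^\ell$. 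The smooth vector fields $X_1=\p_s$, $X_2=s^\ell\p_t$ satisfy Hörmander's bracket condition at step $\ell+1$, since $({\rm ad}\,X_1)^{\ell}X_2=\ell!\,\p_t$. Hörmander's classical sum-of-squares estimate then supplies the required $\frac{1}{\ell+1}$-gain, closing the $r=0$ case.

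\textbf{Stage 2 (general $r$).} I apply the $r=0$ estimate to $\varphi\Lambda^r v$ with $\varphi\in C_0^\infty(B)$ a cutoff equal to $1$ on $\mathop{\rm supp}v$. The left-hand side reproduces the three quantities in \reff{08110105} up to errors generated by the commutator
\[
[\cl,\Lambda^r]v=\p_t[\tilde k,\Lambda^r]\p_t v
\]
(the $\p_{ss}$ piece commutes with $\Lambda^r$), together with cutoff commutators $[\cl,\varphi]$, which are smoothing on $\mathop{\rm supp}v$ and so contribute only $O(\|v\|_0)$. By the standard symbol calculus $[\tilde k,\Lambda^r]\in{\rm Op}(S^{r-1})$, hence $\p_t[\tilde k,\Lambda^r]\p_t\in{\rm Op}(S^{r+1})$; interpolation \reff{inter} then disposes of the sub-top-order pieces by absorption.

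\textbf{Main obstacle.} The delicate point is Stage 2: a naive estimate of $\p_t[\tilde k,\Lambda^r]\p_t v$ at norm level $r-\frac{1}{\ell+1}$ costs an order $\frac{\ell-1}{\ell+1}$ beyond the target $r+\frac{1}{\ell+1}$, exactly the subelliptic gain we wish to keep. The remedy is to exploit the weighted structure $\tilde k\sim s^{2\ell}+Aw^{2n}$: every derivative of $\tilde k$ still carries at least one power of $s^\ell$ or $w^n$, so the commutator symbol admits a decomposition featuring a factor comparable to $\tilde k^{1/2}$. Pairing that factor against the degenerate energy $\|\tilde k^{1/2}\p_t\Lambda^r v\|_0$ already present on the left-hand side permits the absorption to close. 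Since only the first and second derivatives of $\tilde k$ enter the symbol calculus at this level, the dependence of $C_r$ solely on $\com{\tilde k}_{j,\bar B}$ for $0\le j\le 2$ comes out naturally.
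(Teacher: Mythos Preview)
Your two-stage strategy matches the paper's: establish $r=0$ via H\"ormander's subelliptic estimate for $\{\p_s, s^\ell\p_t\}$, then lift to general $r$ by controlling $[\tilde k,\Lambda^r]$ and absorbing its principal part against the weighted term $\|\tilde k^{1/2}\p_t\Lambda^r v\|_0$ already present on the left. One cosmetic difference: the paper dispenses with your cutoff $\varphi$ by observing that $\tilde k$ has been extended smoothly to all of $\rr^2$, so the $r=0$ identity and H\"ormander estimate hold for every $v\in\cs(\rr^2)$ and can be applied directly to $\Lambda^r v$; this spares you the pseudolocality argument needed to control $(1-\varphi)\Lambda^r v$ and $[\cl,\varphi]\Lambda^r v$.

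The substantive gap is your justification of the key pointwise bound $|\nabla\tilde k|\lesssim \tilde k^{1/2}$. Hypothesis \reff{0810271} is only a two-sided \emph{size} comparison for $k$; it says nothing about the derivatives of $k$ or of $\tilde k$, and does not imply that ``every derivative of $\tilde k$ still carries at least one power of $s^\ell$ or $w^n$''. The correct mechanism, which the paper uses, relies only on $\tilde k\ge 0$ being $C^2$: for any nonnegative $f\in C^2$ one has the Glaeser-type inequality
\[
|\nabla f(x)|^2\le 2\com{f}_{2,\rr^2}\, f(x),
\]
obtained by Taylor-expanding $0\le f(x+he_j)$ as a quadratic in $h$ and reading off the nonpositivity of the discriminant. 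This is exactly why $C_r$ depends on $\com{\tilde k}_{2,\bar B}$; the structural assumption \reff{0810271} enters only in Stage~1, to invoke the bracket condition.
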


\begin{rmk}
By using { Fa\`{a} di Bruno's} formula, $\com{\tilde k}_{j, \bar B}$
is bounded by a polynomial of $\com{k}_{i, W}, \com{w}_{i, \bar B},
0\leq i\leq j$.
\end{rmk}

\begin{proof} Firstly, we study the case of $r=0$. Observe
\begin{equation}\label{sub0}
  \norm {\p_s v}_0^2+\|\tilde k^{\frac{1}{2}}\p_t v\|_0^2
=\norm {\p_s v}_0^2+\int_{\rr^2}\tilde k (s, t) |\p_t v(s, t)|^2ds
dt =-\inner{ \cl v, v}.
\end{equation}
Then the assumption \reff{0810271} implies
\begin{equation*}
\norm {\p_s v}_0^2+\|s^{\ell}\p_t v\|_0^2 \leq c \big\{\norm {\p_s
v}_0^2+\|\tilde k^{\frac{1}{2}}\p_t v\|_0^2\big\} =-c\inner{ \cl
v, v}.
\end{equation*}
Since the vector fields $\{\p_s, s^{\ell}\p_t\}$ satisfies the
H\"ormander's condition of order $\ell$, we get (see \cite{MR730094,rotschild-stein})
\begin{equation}\label{sub1}
\norm {v}_{\frac{1}{\ell+1}}^2\leq C_0 \big\{\norm {\p_s
v}_0^2+\|\tilde k^{\frac{1}{2}}\p_t v\|_0^2+\|v\|_0^2\big\}
=-C_0\inner{ \cl v, v}+C_0\|v\|_0^2.
\end{equation}
By Cauchy-Schwarz inequality, we have proved \reff{08110105} with
$r=0$. Since we have extended $\tilde k$ to $\rr^2$, \reff{sub0}
\reff{sub1} also hold  for any $v\in\cs(\rr^2)$.

Now for the general case, we have
\begin{align*}
  &\|\p_s\Lambda^r v\|_0^2+\|\tilde k^{\frac{1}{2}}\p_t\Lambda^r v\|_0^2
  =-\inner{ \Lambda^r \cl v, ~\Lambda^r v}-
  \inner{\com{\tilde k,~\Lambda^r} \p_t v, ~ \p_t\Lambda^r v}\\
  &\leq \|\cl v\|_{r-\frac{1}{\ell+1}}^2+\norm{ v}_{r+\frac{1}{\ell+1}}^2
  -\inner{\com{\tilde k,~\Lambda^r} \p_t v, ~ \p_t\Lambda^r v}.
\end{align*}
Since for $v\in C_0^\infty(B)$, we have $\Lambda^r v\in
\cs(\rr^2)$. Then \reff{sub1} implies
\begin{align}
  &\norm{ v}_{r+\frac{1}{\ell+1}}^2+\|\p_s\Lambda^r v\|_0^2+
  \|\tilde k^{\frac{1}{2}}\p_t\Lambda^r v\|_0^2\label{sub2}\\
  &\leq C_0 \set{\norm{\cl v}_{r-\frac{1}{\ell+1}}^2+\norm{ v}_{0}^2
  -\inner{\com{\tilde k,~\Lambda^r} \p_t v, ~ \p_t\Lambda^r v}
  }.\nonumber
\end{align}

We consider now the commutator $\com{\tilde k,~\Lambda^r}$, the
pseudo-differential  calculus give
\[
\sigma \big(\com{\tilde k,~\Lambda^r}\big)=\sum_{|\alpha|=1}\p^\alpha_{s, t} \tilde k (s,
t) \p^\alpha_{\zeta,\eta} \sigma
\big(\Lambda^r\big)(\zeta,\eta)+\sigma(R_2)(s, t, \zeta,\eta),
\]
with $\sigma(R_2)\in S^{r-2}(\rr^2)$ and
\[
\left|\inner{R_2 \p_t v, ~ \p_t\Lambda^r v}\right|\leq C_2\|v\|^2_r,
\]
where $C_2$ depends only on $\com{\,\tilde k\,}_{j, \bar B}, 0\leq
j\leq 2$. Thus
\begin{align*}
  \inner{\com{\tilde k,~\Lambda^r}\p_t v, ~ \p_t\Lambda^r v}&\leq
  C_0\norm{v}_{r}\set{\norm{\inner{\p_s\tilde k}\p_t\Lambda^{r} v}_{0}
  +\norm{\inner{\p_t\tilde k}\p_t\Lambda^{r} v}_{0}}+C\|v\|^2_r.
\end{align*}
Moreover, note that $\tilde k$ is nonnegative, and hence we have the
following well-known inequality
\begin{eqnarray}\label{090916}
|\p_s\tilde k(s,t)|^2+|\p_t\tilde k(s,t)|^2\leq 4 [\,\tilde k\,]_{2,
\rr^2} \tilde k(s,t).
\end{eqnarray}
For the sake of completeness, we will present the proof of the above
inequality later. By Cauchy-Schwarz inequality and interpolation
inequality \reff{inter}, one has
\begin{align*}
  \inner{\com{\tilde k,~\Lambda^r} \p_t v, ~ \p_t\Lambda^r v}\leq
  \frac{1}{2C_0}\Big(\norm{\tilde k^{\frac{1}{2}}\p_t\Lambda^{r}\ v}_{0}^2+
  \norm{v}_{r+\frac{1}{\ell+1}}^2\Big)+C_r\|v\|^2_0.
\end{align*}
Thus Lemma \ref{lem3.1} follows. Now it remains to show
\reff{090916}. For any  $h\in\mathbb{R},$  the following formula
holds
\[
\tilde k(s+h,t)=\tilde k(s,t)+\partial_s\tilde k (s,t)
h+\frac{1}{2}\partial_{ss}\tilde k(s_0,t)h^2, \quad s_0\in
\mathbb{R}.
\]
Observe $\tilde k\geq0,$ then  for all $h\in\mathbb{R}$ we get
$0\leq \tilde k(s,t)+\partial_s\tilde k (s,t) h+\frac{1}{2}[\,\tilde
k\,]_{2, \rr^2} h^2$. So the  the discriminant of this polynomial is
nonpositive; that is,
\[
|\p_s\tilde k(s,t)|^2\leq 2 [\,\tilde k\,]_{2, \rr^2} \tilde k(s,t).
\]
Similarly $|\p_t\tilde k(s,t)|^2\leq 2 [\,\tilde k\,]_{2, \rr^2}
\tilde k(s,t).$ This gives \reff{090916}.
\end{proof}

\begin{rmk}\label{remark2.4}
With same proof, we can also prove the following estimate
$$
  \norm{v}^2_{r+m+\frac{1}{\ell+1}}+\sum_{|\alpha|\leq m}
  \left(\norm {\p_s \Lambda^r\p^\alpha v}_0^2
  +\|\tilde k^{\frac{1}{2}}\p_t\Lambda^r\p^\alpha v\|^2_0\right)
  \leq  C_{r, m}\set{\norm{\cl v}^2_{r+m-\frac{1}{\ell+1}}+\|v\|^2_0}
$$
for any $v\in C^\infty_0(B)$.
\end{rmk}

A key technical  step in the proof of Gverey regularity is to choose
a adapted family of cutoff functions. For $0<\rho<1,$ set
\[
B_\rho=\set{(s,t)\V s^2+t^2<1-\rho}.
\]
For each integer $m\geq2$ and each number $0<\rho<1,$ we choose
the cutoff function $\vpi_{\rho, m}$ satisfying the  following
properties:
  \begin{equation}\label{0811013}
    \left\{
    \begin{array}{lll}
    {\rm supp}~
    \vpi_{\rho,m}\subset  B_{\frac{(m-1)\rho}{m}},\quad{\rm
    and~~}
    \vpi_{\rho,m}(s,t)=1{~~\rm  in~~}  B_\rho,\\[2pt]
    \sup\limits_{(s,t)\in B}\abs{\p^k \vpi_{\rho,m}}\leq
    C_k\inner{\frac{m}{\rho}}^{k}.
    \end{array}
    \right.
\end{equation}
For such cut-off functions, we have the following
\begin{lem}[Corollary 0.2.2 of \cite{Durand78}]\label{0810182}
  There exists a constant $C,$  such that
  for any $0\leq\kappa\leq 4,$ and any $f\in\mathcal{S}(\rr^{2}),$
  \begin{equation}\label{cutoffnorm}
   \norm{\inner{\p^j\vpi_{\rho, m}}f}_{\kappa}\leq
   C\set{\inner{\frac{m}{\rho}}^{j}\norm
   f_{\kappa}+\inner{\frac{m}{\rho}}^{j+\kappa}
   \norm{f}_0}, \quad 0\leq j\leq 2.
  \end{equation}
\end{lem}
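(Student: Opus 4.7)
My plan is to first establish \eqref{cutoffnorm} for integer exponents $\kappa=k\in\{0,1,2,3,4\}$ and then recover the general case by interpolation between consecutive Sobolev orders. For integer $k$, I would start by applying the Leibniz rule to each derivative of order $|\alpha|\leq k$:
\[
\p^\alpha\bigl[(\p^j\vpi_{\rho,m})f\bigr] = \sum_{\beta\leq\alpha}\binom{\alpha}{\beta}\,(\p^{j+\beta}\vpi_{\rho,m})\,\p^{\alpha-\beta}f.
\]
The $L^\infty$ bound $\|\p^{j+\beta}\vpi_{\rho,m}\|_{L^\infty}\leq C(m/\rho)^{j+|\beta|}$ from \eqref{0811013}, together with Cauchy--Schwarz, would then yield the crude estimate
\[
\|(\p^j\vpi_{\rho,m})f\|_k \leq C\sum_{|\beta|\leq k}\inner{\frac{m}{\rho}}^{j+|\beta|}\|f\|_{k-|\beta|}.
\]

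The key step is next to collapse the intermediate Sobolev norms into just $\|f\|_k$ and $\|f\|_0$, with the correct powers of $m/\rho$ attached. I would invoke the standard convexity inequality $\|f\|_{k-|\beta|}\leq C\|f\|_k^{1-|\beta|/k}\|f\|_0^{|\beta|/k}$ and rewrite each mixed term as an exact geometric mean,
\[
\inner{\frac{m}{\rho}}^{j+|\beta|}\|f\|_{k-|\beta|} = \bigl[(m/\rho)^{j}\|f\|_k\bigr]^{1-|\beta|/k}\bigl[(m/\rho)^{j+k}\|f\|_0\bigr]^{|\beta|/k}.
\]
A single application of Young's inequality then bounds this by the arithmetic mean of the two target quantities, settling the integer case. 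For non-integer $\kappa\in(k,k+1)$ with $k\in\{0,1,2,3\}$, I would interpolate by $\|h\|_\kappa\leq C\|h\|_k^{k+1-\kappa}\|h\|_{k+1}^{\kappa-k}$, plug in the two integer bounds already obtained, and apply Young once more; the weighted exponents then balance to produce precisely $(m/\rho)^j$ in front of $\|f\|_\kappa$ and $(m/\rho)^{j+\kappa}$ in front of $\|f\|_0$, with a constant uniform over the bounded interval $\kappa\in[0,4]$.

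The main obstacle, such as it is, is purely bookkeeping: making the weight decomposition $j+|\beta|=j(1-|\beta|/k)+(j+k)(|\beta|/k)$ match the convexity exponents of the Sobolev norms, so that no intermediate order $k-|\beta|$ remains in the final bound. No analytic subtlety, commutator identity, or nonlinear device enters here; the lemma is ultimately a quantitative product rule for Sobolev norms in the presence of a cutoff whose $k$-th derivative is of order $(m/\rho)^k$.
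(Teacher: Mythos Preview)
The paper does not supply its own proof of this lemma; it is quoted as Corollary~0.2.2 of Durand~\cite{Durand78}, so there is no argument in the text to compare against. Your treatment of the integer case is correct and pleasantly elementary: the Leibniz expansion together with the log-convexity $\norm{f}_{k-p}\leq\norm{f}_k^{1-p/k}\norm{f}_0^{p/k}$ and the identity $j+p=j(1-p/k)+(j+k)(p/k)$ collapse all intermediate orders to the two endpoints via Young's inequality.

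The non-integer step, however, does not close as you describe. Interpolating $\norm{(\p^j\vpi_{\rho,m})f}_\kappa$ between the integer orders $k$ and $k{+}1$ and inserting the two integer bounds produces, among the cross terms, the quantity $(m/\rho)^j\,\norm{f}_k^{\,1-\theta}\norm{f}_{k+1}^{\,\theta}$ with $\theta=\kappa-k$. The convexity inequality reads $\norm{f}_\kappa\leq\norm{f}_k^{\,1-\theta}\norm{f}_{k+1}^{\,\theta}$, which is the \emph{wrong direction}: the geometric mean on the right cannot be bounded by $\norm{f}_\kappa$, nor by any combination of $\norm{f}_\kappa$ and $\norm{f}_0$, since $k{+}1>\kappa$. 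Similarly the $\norm{f}_0$ contribution emerges with exponent $j+k+1$ rather than $j+\kappa$. So ``the weighted exponents balance'' is not justified. A clean repair is to interpolate at the operator level: set $\lambda=m/\rho$, $\Lambda_\lambda=(\lambda^2+\abs{D}^2)^{1/2}$, and note that $\norm{\Lambda_\lambda^{s}g}_0$ is equivalent, uniformly in $\lambda\geq1$ and $s\in[0,4]$, to $\norm{g}_s+\lambda^{s}\norm{g}_0$. Your integer bound then reads $\norm{\Lambda_\lambda^{k}Tf}_0\leq C\lambda^{j}\norm{\Lambda_\lambda^{k}f}_0$ for $Tf=(\p^j\vpi_{\rho,m})f$, and Stein interpolation applied to the analytic family $z\mapsto\Lambda_\lambda^{z}T\Lambda_\lambda^{-z}$ on $L^2$ carries this to all real $\kappa\in[0,4]$, which unpacks to \eqref{cutoffnorm}.
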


We prove now Theorem \ref{08111508} by the following Proposition.

\begin{prop}\label{0810272}
  Let $w\in C^\infty(\bar B)$ be a smooth solution of the
  quasi-linear equation \reff{quasi}.
  Suppose $k\in G^{\ell+1}( \rr^2).$ Then there exists a
  constant $L,$ such that for any integer $m\geq 5$,
  we have the following estimate
  \begin{align}\label{ind}
  \begin{split}
    &\|\vpi_{\rho,m}\p^m w\|_{2+\frac{j}{\ell+1}}
    +\|\p_s\Lambda^{2+\frac{j-1}{\ell+1}}\vpi_{\rho,m}\p^m w\|_{0}
    +\|\tilde k^{\frac{1}{2}}\p_t\Lambda^{2+\frac{j-1}{\ell+1}}\vpi_{\rho,m}\p^m
    w\|_{0}\\
    &\qquad\leq \frac{L^{m-2}}{\rho^{\inner{\ell+1}(m-3)}}
    \inner{\frac{m}{\rho}}^{j}\big(\inner{m-3}!\big)^{\ell+1},
    \, 0\leq j\leq \ell+1,\, 0<\rho<1.
  \end{split}
  \end{align}
\end{prop}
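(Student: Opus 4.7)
The plan is to argue by induction on $m\ge 5$, with a nested finite induction on $j\in\{0,1,\ldots,\ell+1\}$ for each fixed $m$. The base case $m=5$ is immediate from the smoothness of $w$ on $\bar B$ once $L$ is chosen large enough. For the inductive step at level $m\ge 6$, fix a multi-index $\alpha$ with $\abs\alpha=m$ and apply the subelliptic estimate of Remark \ref{remark2.4} to $v=\vpi_{\rho,m}\p^\alpha w\in C_0^\infty(B)$ with $r=2+(j-1)/(\ell+1)$; summing over $\abs\alpha=m$, the left-hand side of \reff{ind} is then controlled by $C_r\bigl\{\norm{\cl v}_{r-1/(\ell+1)}+\norm{v}_0\bigr\}$, so the task reduces to an upper bound for $\norm{\cl v}_{r-1/(\ell+1)}$ of the desired Gevrey shape.

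Writing $\cl v=\vpi_{\rho,m}\cl(\p^\alpha w)+\com{\cl,\vpi_{\rho,m}}\p^\alpha w$, the commutator is a combination of terms of the form $(\p^\mu\vpi_{\rho,m})\tilde k^\epsilon(\p^\nu w)$ with $\abs\mu\le 2$, $\epsilon\in\{0,1\}$ and $\abs\nu\le m+1$. Lemma \ref{0810182} distributes each $\p^\mu\vpi_{\rho,m}$ as a factor $(m/\rho)^{\abs\mu}$, exactly matching the $(m/\rho)^j$ growth on the right-hand side of \reff{ind}, while the induction hypothesis at level $m-1$ (applied with the slightly shrunken radius $\rho'=(m-1)\rho/m$) bounds the remaining $\p^\nu w$-factors. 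For the main piece $\vpi_{\rho,m}\cl(\p^\alpha w)$ we use $\cl w=0$ to write
\[
\cl(\p^\alpha w)=-\p_t\sum_{0<\beta\le\alpha}\binom{\alpha}{\beta}(\p^\beta\tilde k)(\p_t\p^{\alpha-\beta}w),
\]
and expand each $\p^\beta\tilde k$ via Fa\`a di Bruno's formula in terms of derivatives of $k$ (controlled by $k\in G^{\ell+1}$) and products of lower-order derivatives of $w$ (controlled by induction). The algebra property of $H^\kappa$ for $\kappa>1$ then turns each such product into a manageable sum of Sobolev norms to which the induction hypothesis directly applies.

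The main obstacle is the top-order Fa\`a di Bruno contribution $(\p_w k)\p^\beta w$ appearing when $\abs\beta=m$: the companion factor $\p_t\p^{\alpha-\beta}w=\p_t w$ is smooth on $\bar B$, but $\p^m w$ is the very quantity we are trying to bound, so a direct appeal to the induction hypothesis fails. The rescue is that $(\p_w k)(\p_t w)=\p_t\tilde k$, and inequality \reff{090916} gives $\abs{\p_t\tilde k}\le C\tilde k^{1/2}$. Consequently, after the outer $\p_t$ the offending term decomposes into $(\p_{tt}\tilde k)\p^\alpha w+(\p_t\tilde k)(\p_t\p^\alpha w)$: the second piece matches exactly the structure of $\tilde k^{1/2}\p_t\Lambda^r\vpi_{\rho,m}\p^\alpha w$ on the left-hand side of the subelliptic estimate and is absorbed once multiplied by a small constant, while the first piece is handled by the induction hypothesis at the previous value of $j$, together with the interpolation inequality \reff{inter} which likewise controls the lower-order error $\norm{v}_0$. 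Choosing $L$ large enough to dominate the universal constants arising from Remark \ref{remark2.4}, Lemma \ref{0810182}, Fa\`a di Bruno and the Sobolev algebra then closes the induction simultaneously in $m$ and $j$.
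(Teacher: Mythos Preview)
Your double-induction scheme and the use of Lemma \ref{lemm2.1}-type estimates for the step $j_0\to j_0+1$ are exactly what the paper does. The genuine gap is the \emph{initialization} $j=0$ of the inner induction, where your argument breaks down in two related places.

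First, you claim that in the commutator $[\cl,\vpi_{\rho,m}]\p^\alpha w$ ``the induction hypothesis at level $m-1$\ldots\ bounds the remaining $\p^\nu w$-factors''. This is not so: the commutator contains the top-order pieces $(\p_s\vpi_{\rho,m})\p_s\p^\alpha w$ and $(\p_t\vpi_{\rho,m})\tilde k\,\p_t\p^\alpha w$, each carrying $m+1$ derivatives of $w$. In the paper (Steps 2--4 of the proof of Lemma \ref{lemm2.1}) these are controlled using the hypothesis \reff{08111306}, i.e.\ the already-established case $j=j_0$ at the \emph{same} level $m=N$, not at level $m-1$. For $j\ge1$ this is fine --- it is the $j$-induction hypothesis --- but at $j=0$ you have nothing at level $m$ to feed in.

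Second, the same issue hits your treatment of the main piece. You single out the offending term $(\p_t\tilde k)(\p_t\p^\alpha w)$ and propose to absorb it into the left-hand side ``once multiplied by a small constant''. But after the pointwise bound $|\p_t\tilde k|\le C\tilde k^{1/2}$ and commuting $\Lambda^{r-1/(\ell+1)}$ past the coefficient, the resulting term is essentially $\|\tilde k^{1/2}\p_t\Lambda^{2+(j-2)/(\ell+1)}\vpi_{\rho,m}\p^m w\|_0$, which for $j\ge1$ is exactly the weighted term in the $j-1$ case of \reff{ind} at level $m$ --- so again it is the $j$-induction hypothesis, not an absorption, that closes the estimate (this is precisely the content of \reff{4.15+1}). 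At $j=0$ there is no previous $j$, and there is no smallness in the constants $C_r, [\tilde k]_2$ that would make a genuine absorption possible; dropping the weight $\tilde k^{1/2}$ loses one full derivative and leaves you with $\|\vpi_{\rho,m}\p^m w\|_{3-2/(\ell+1)}$, which for $\ell\ge1$ is at least the $H^2$ norm you are trying to bound. (You also overlook the equally top-order Leibniz terms with $|\beta|=1$, which produce $(\p^\beta\tilde k)\p_{tt}\p^{\alpha-\beta}w$ and have the same structure.)

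The paper's remedy is to start the $j$-induction differently: for $j=0$ one applies Remark \ref{remark2.4} with $v=\vpi_{\rho,N}\p^{N-1}w$ (one derivative fewer) and the built-in extra $\p^1$, so that the needed quantity is $\|\cl\vpi_{\rho,N}\p^{N-1}w\|_{3-2/(\ell+1)}$. This is bounded by Lemma \ref{lemm2.2}, whose hypothesis is only \reff{ind} for $m\le N-1$, i.e.\ the outer induction hypothesis. Once $j=0$ at $m=N$ is in hand, Lemma \ref{lemm2.1} drives the $j$-induction forward as you describe.
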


\begin{rmk}
  The constant $L$ in Proposition \ref{0810272}  depends
  on $\ell, \com w_{8,\bar B}$,  the Gevrey constant of
  $k,$ and is independent of $m.$
\end{rmk}

As an immediate consequence, for each compact subset $K\subset B,$
if we choose $\rho_0=\frac{1}{2}{\rm dist}\inner{K,\p B}$.  Then $
\vpi_{\rho_0, m}=1$ on $K$ for any $m$, and \reff{ind} for $j=0$
yields,
\[
  \norm{\p^m w}_{L^2(K)}
  \leq \inner{\frac{ L}{\rho_0^{\ell+1}}}^{m+1}\inner{m!}^{\ell+1},
  \quad\forall ~m\in\nn.
\]
This gives $u\in G^{\ell+1}(B).$ The proof of Theorem \ref{08111508}
is thus completed.

\bigbreak The proof of Proposition \ref{0810272} is by induction on
$m$.  We state now the following two Lemmas, and postpone their
proof to the last section.

\begin{lem}\label{lemm2.1}
 Let $k\in G^{\ell+1}(\rr^2)$ and $w\in C^\infty(\bar B)$ be a solution of equation \reff{quasi}.
 Suppose that for some $N>5$, \reff{ind} is satisfied for any $5\leq m\leq N-1$, and that for some
 $0\leq j_0\leq \ell$, we have
  \begin{align}\label{08111306}
  \begin{split}
    &\norm{\vpi_{\rho, N}\p^N w}_{2+\frac{j_0}{\ell+1}}
    +\norm{\p_s\Lambda^{2+\frac{j_0-1}{\ell+1}}\vpi_{\rho, N}\p^N w}_{0}
    +\norm{\tilde k^{\frac{1}{2}}\p_t\Lambda^{2+\frac{j_0-1}{\ell+1}}\vpi_{\rho, N}\p^N w}_{0}\\
    &\qquad\qquad\leq \frac{C_0L^{N-3}}{{\rho}^{(\ell+1)(N-3)}}
    \inner{\frac{N}{\rho}}^{j_0}\Big(\inner{N-3}!\Big)^{\ell+1},
    \quad\forall ~0<\rho<1,
  \end{split}
  \end{align}
  where $C_0$ is a constant independent of $L, N$. Then there exists a constant $C_1$
  independent of $L, N,$ such that for any $0<\rho<1,$
  \begin{equation}\label{2.12}
    \norm{ \cl \vpi_{\rho, N}\p^N w}_{2+\frac{j_0-1}{\ell+1}}\leq
    \frac{C_1 L^{N-3}}{{\rho}^{(\ell+1)(N-3)}}
    \inner{\frac{N}{\rho}}^{j_0+1}\Big(\inner{N-3}!\Big)^{\ell+1}.
  \end{equation}
\end{lem}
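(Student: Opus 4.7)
The plan is to expand $\cl(\vpi_{\rho,N}\p^\alpha w)$ into a cutoff-commutator piece and a Leibniz piece, exploiting the equation to remove the highest-order term. For a multi-index $\alpha$ with $|\alpha|=N$, I would write
\[
\cl(\vpi_{\rho,N}\p^\alpha w) \;=\; [\cl,\vpi_{\rho,N}]\p^\alpha w \;+\; \vpi_{\rho,N}\cl(\p^\alpha w),
\]
and use $\cl w=0$ together with the fact that $\p_{ss}$ commutes with $\p^\alpha$ to get
\[
\cl(\p^\alpha w) \;=\; -\p_t \sum_{0<\beta\leq\alpha}\binom{\alpha}{\beta}(\p^\beta\tilde k)(\p^{\alpha-\beta}\p_t w).
\]
The task then becomes to bound each resulting term in $\norm{\cdot}_{2+(j_0-1)/(\ell+1)}$ and to sum over $|\alpha|=N$.

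For the cutoff commutator, $[\cl,\vpi_{\rho,N}]$ produces terms involving at most one or two derivatives of $\vpi_{\rho,N}$ multiplied by $\p^\alpha w$, $\p_s\p^\alpha w$ or $\tilde k\p_t\p^\alpha w$. The standard trick is to insert an auxiliary cutoff $\tilde\vpi=\vpi_{\rho',N}$ with $\rho'=(N-1)\rho/N$, chosen so that $\tilde\vpi\equiv 1$ on ${\rm supp}\,\nabla\vpi_{\rho,N}$, and to rewrite $(\p^j\vpi_{\rho,N})\p^\gamma w=(\p^j\vpi_{\rho,N})\tilde\vpi\,\p^\gamma w$. Lemma \ref{0810182} then trades each derivative of $\vpi_{\rho,N}$ for a factor $N/\rho$, while the factor $\tilde\vpi\,\p^\alpha w$ (together with $\p_s\tilde\vpi\p^\alpha w$ and $\tilde k^{1/2}\p_t\tilde\vpi\p^\alpha w$) is controlled directly by the hypothesis \reff{08111306}, with the small geometric loss $\rho/\rho'=1-1/N$ absorbed into $\mathcal{O}(1)$ constants.

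For the Leibniz piece I would split the sum over $\beta$ at a low threshold. The small-$\beta$ terms ($|\beta|\leq 2$) are handled by the algebra property of $H^{r}$ with $r=2+(j_0-1)/(\ell+1)>1$, combined with the $C^2$-bounds on $\tilde k$ quoted in the remark following Lemma \ref{lem3.1} and the hypothesis \reff{08111306} applied to $\tilde\vpi\,\p^{\alpha-\beta}\p_t w$. The large-$\beta$ terms are where the outer induction is consumed: by Fa\`a di Bruno applied to $\tilde k(s,t)=k(s,w(s,t))$, $\p^\beta\tilde k$ is a polynomial in $\p^\gamma k$ with $|\gamma|\leq|\beta|$ (bounded via the Gevrey hypothesis on $k$) and in $\p^\delta w$ with $|\delta|\leq|\beta|\leq N$; the inductive bound \reff{ind} for $|\delta|\leq N-1$ together with \reff{08111306} for $|\delta|=N$ then supplies the factorial and $L^{N-3}$ structure required. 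The $\beta$-sum is closed by a convolution estimate of the type
\[
\sum_{\beta\leq\alpha}\binom{\alpha}{\beta}\bigl((|\beta|-1)!\bigr)^{\ell+1}\bigl((|\alpha-\beta|-1)!\bigr)^{\ell+1}\leq C\bigl((N-1)!\bigr)^{\ell+1},
\]
adjusted so that the right-hand side becomes $\bigl((N-3)!\bigr)^{\ell+1}$ after absorbing small factors of $N^{\mathcal{O}(1)}$ into the prefactor $(N/\rho)^{j_0+1}$.

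The principal obstacle will be the combinatorial bookkeeping: tracking Fa\`a di Bruno multipliers, binomial coefficients, factorials and the geometric losses from $\rho\to\rho'$, all while keeping the final constant $C_1$ independent of both $L$ and $N$, and, crucially, preventing the power of $L$ from inflating when $\p^\beta\tilde k$ is expanded for $|\beta|$ close to $N$. Once this bookkeeping is executed cleanly---separating low-order ``free'' contributions (that never invoke the induction) from high-order contributions (where the inductive Gevrey bound on $w$ is applied exactly once)---the extra factor $N/\rho$ relative to \reff{08111306} emerges naturally from the single additional $\p_t$ in the Leibniz formula together with the derivative falling on $\vpi_{\rho,N}$ in the cutoff commutator, yielding precisely \reff{2.12}.
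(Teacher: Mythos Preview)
Your decomposition and overall architecture match the paper's, but there is one genuine gap in the Leibniz piece that would cause the argument to fail as written.

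The problematic term is the small-$\beta$ case $|\beta|=1$ once the outer $\p_t$ is distributed. In the paper's notation this is the term in $\cs_2$ with $|\beta|=1$, namely $\vpi_{\rho,N}\,(\p^\beta\tilde k)\,\p_{tt}\p^{\alpha-\beta}w$, where $\p_{tt}\p^{\alpha-\beta}w$ carries $N+1$ derivatives of $w$. Your plan---algebra property of $H^r$ plus $C^2$-bounds on $\tilde k$ plus hypothesis \reff{08111306}---cannot close here: after the algebra step you would need $\norm{\tilde\vpi\,\p^N w}_{3+(j_0-1)/(\ell+1)}$, which exceeds the regularity $2+j_0/(\ell+1)$ supplied by \reff{08111306} whenever $\ell\geq 1$. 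A bare $L^\infty$ bound on $\p^\beta\tilde k$ does not help, because the extra $\p_t$ still has to land on the $w$-factor.

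The missing idea is the pointwise inequality coming from the nonnegativity of $k$: since $k(s,\cdot)\geq 0$, one has $|k_w(s,w)|\leq C\big(\sup_w|k_{ww}|\big)^{1/2}\,k(s,w)^{1/2}$, and hence $|\p_t\tilde k|\leq C\,\tilde k^{1/2}$. This converts the dangerous factor $(\p^\beta\tilde k)\,\p_t$ into $\tilde k^{1/2}\p_t$, and then the \emph{third} term in \reff{08111306}, namely $\norm{\tilde k^{1/2}\p_t\Lambda^{2+(j_0-1)/(\ell+1)}\vpi_{\rho,N}\p^N w}_0$, controls it (after commuting $\Lambda^r$ past $\tilde k_w$, which costs only lower order). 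The same device is needed for the top term $\beta=\alpha$ in $\cs_1$, i.e.\ for $\vpi_{\rho,N}\,\p_t\p^\alpha\tilde k$: the paper expands $\p_t\p^\alpha\tilde k$ through $\tilde k_w$ and again invokes $|\tilde k_w|\leq C\tilde k^{1/2}$ on the leading piece $\tilde k_w\,\p_t\p^\alpha w$. You hint at using the weighted term of \reff{08111306} in the cutoff-commutator discussion, but in the Leibniz discussion you explicitly invoke only ``$C^2$-bounds on $\tilde k$'' and the algebra property, which is not enough.

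A secondary point: for the middle range $5\leq|\beta|\leq N-4$ the paper does not apply Fa\`a di Bruno term by term inside the proof, but packages it into Corollary \ref{0811012} (built on Lemma \ref{081101}), which directly gives the Gevrey bound for $\vpi_{\rho,m}\p^m\tilde k$ and $\vpi_{\rho,m}\p^m\tilde k_w$ with the correct power $L^{m-2}$. This is what keeps the total power of $L$ at $L^{N-3}$ after the convolution sum, the concern you flag at the end. Your sketch is compatible with this, but you should be aware that the clean bookkeeping relies on estimating $\p^\beta\tilde k_w$ as a whole (via the composition lemma) rather than unpacking Fa\`a di Bruno inside the main estimate.
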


\begin{lem}\label{lemm2.2}
  Let $k\in G^{\ell+1}(\rr^2)$ and $w\in C^\infty(\bar B)$ be a solution of equation \reff{quasi}.
 Suppose that for some $N>5$, \reff{ind} is satisfied for any $5\leq m\leq N-1$. Then there exists
 a constants $C_2$ independent of $L, N,$ such that
  \begin{equation}\label{08111308++}
    \norm{ \cl \vpi_{\rho, N}\p^{N-1} w}_{2+\frac{j-1}{\ell+1}}\leq
    \frac{C_2L^{N-3}}{{\rho}^{(\ell+1)(N-4)}}
    \inner{\frac{N-1}{\rho}}^{j+1}\Big(\inner{N-4}!\Big)^{\ell+1}
  \end{equation}
for all $0<\rho<1,\,\, 0\leq j\leq \ell+1.$
\end{lem}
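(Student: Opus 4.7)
My plan is to follow the decomposition used in the proof of Lemma \ref{lemm2.1}, exploiting that we differentiate $w$ only $N-1$ times; no hypothesis of the shape \reff{08111306} is needed, only the inductive bound \reff{ind} at $m=N-1$ (and lower). Write
\[
\cl(\vpi_{\rho, N}\p^{N-1}w)
\;=\; \vpi_{\rho, N}\,\cl \p^{N-1}w \;+\; [\cl,\, \vpi_{\rho, N}]\,\p^{N-1}w,
\]
and estimate each piece separately in $H^{2+(j-1)/(\ell+1)}$. The commutator $[\cl,\vpi_{\rho, N}]$ is a first-order differential operator whose coefficients are $\p^\gamma\vpi_{\rho, N}$ (with $1\le|\gamma|\le 2$) multiplied by $1$, $\tilde k$, or $\p\tilde k$. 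Applying Lemma \ref{0810182} together with the Gevrey estimate for $\tilde k$ from the Remark after Lemma \ref{lem3.1}, followed by \reff{ind} at $m=N-1$, each such contribution is dominated by the right-hand side of \reff{08111308++}: the $(N/\rho)$-factor produced by one cutoff derivative supplies exactly the extra $\bigl((N-1)/\rho\bigr)^{j+1}$ we are allowed.

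For the main term $\vpi_{\rho, N}\cl\p^{N-1}w$, I differentiate the equation $\cl w=0$. Since the $\p_{ss}$-part of $\cl$ commutes with $\p^{N-1}$, one obtains $\cl\p^{N-1}w=[\cl,\p^{N-1}]w=-\p_t G_N$, where
\[
G_N \;:=\; \sum_{1\le|\beta|\le N-1}\binom{N-1}{\beta}(\p^\beta \tilde k)\,(\p^{N-1-\beta}\p_t w).
\]
Pulling the outer $\p_t$ through $\vpi_{\rho, N}$ (paying only a further commutator of the kind already handled) reduces matters to bounding $\vpi_{\rho, N}G_N$ in $H^{3+(j-1)/(\ell+1)}$. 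The Sobolev algebra property in $\rr^2$ combined with a split at $|\beta|=[N/2]$ then gives two regimes. For $|\beta|\le[N/2]$, I bound $\|\p^\beta\tilde k\|_{L^\infty}$ by the Gevrey estimate $C_{\tilde k}^{|\beta|+1}(|\beta|!)^{\ell+1}$ and control $\p^{N-1-\beta}\p_t w$ in the relevant Sobolev norm by \reff{ind} applied at $m=N-|\beta|\le N-1$; for $|\beta|>[N/2]$ the roles reverse, so the low-order $\p^{N-1-\beta}\p_t w$ falls into $\com{w}_{[N/2], \bar B}$ (uniformly bounded since $w\in C^\infty(\bar B)$) while the high-order $\p^\beta\tilde k$ is controlled through its Gevrey seminorm, using Fa\`a di Bruno applied to $\tilde k=k(s,w(s,t))$ together with the earlier induction steps to handle any $w$-derivatives arising along the way.

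The main obstacle is the combinatorial bookkeeping. Using $|\beta|!(N-1-|\beta|)!\le(N-1)!$ together with $|\beta|+(N-\beta-3)=N-3$, the product $(|\beta|!)^{\ell+1}((N-\beta-3)!)^{\ell+1}$ produced respectively by Gevrey and by induction collapses into $((N-3)!)^{\ell+1}$; the ratio $((N-3)!)^{\ell+1}/((N-4)!)^{\ell+1}=(N-3)^{\ell+1}$ then fits comfortably inside the allowed $\bigl((N-1)/\rho\bigr)^{j+1}$-factor together with the free weight $\rho^{-(\ell+1)(N-4)}$. The binomial coefficient and the outer $\p_t$-commutator contribute only polynomial-in-$N$ overheads that are absorbed in the same way. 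A secondary subtlety is that Fa\`a di Bruno on $\tilde k$ at large $|\beta|$ generates mixed derivatives of $w$ up to order $|\beta|$; when these exceed the uniformly-bounded range they are fed back into \reff{ind} at lower $m$ via Sobolev embedding, and one must verify that this nested use of induction costs only $L$-independent constants, so that $C_2$ is indeed independent of $L$ and $N$. Summing the two contributions over $\beta$ then yields \reff{08111308++}.
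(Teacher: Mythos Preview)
Your overall decomposition into $[\cl,\vpi_{\rho,N}]\p^{N-1}w$ and $\vpi_{\rho,N}[\cl,\p^{N-1}]w$ matches the paper, which says the proof of Lemma~\ref{lemm2.2} is the same as that of Lemma~\ref{lemm2.1}. However, there is a genuine gap in your handling of the second piece.

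You write $[\cl,\p^{N-1}]w=-\p_t G_N$ and then ``pull the outer $\p_t$ out'', reducing to bounding $\|\vpi_{\rho,N}G_N\|_{3+(j-1)/(\ell+1)}$. The problem is the $|\beta|=1$ terms of $G_N$, namely $(\p^\beta\tilde k)(\p_t\p^{N-2}w)$, whose $w$-factor has order $N-1$. The Sobolev algebra then forces you to control $\|\vpi\,\p^{N-1}w\|_{3+(j-1)/(\ell+1)}$, but the induction hypothesis \reff{ind} at $m=N-1$ only supplies $\|\vpi\,\p^{N-1}w\|_{2+j'/(\ell+1)}$ with $j'\leq\ell+1$, i.e.\ at most an $H^3$-bound. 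For $j\geq2$ the index $3+(j-1)/(\ell+1)$ strictly exceeds $3$ and the required bound is simply unavailable; shifting a derivative onto $w$ to reach $\p^N w$ is not allowed either, since \reff{ind} stops at $m=N-1$. The paper avoids this by \emph{not} pulling out $\p_t$: it distributes it via Leibniz, producing the two sums $\cs_1,\cs_2$ of Step~6 (now with $|\alpha|=N-1$). The dangerous $|\beta|=1$ contribution becomes $(\p^\beta\tilde k)\,\p_t\p^{N-1}w$, which is handled by the pointwise inequality $|\p\tilde k|\leq C\tilde k^{1/2}$ (from $\tilde k\geq0$) together with the \emph{weighted} estimate $\|\tilde k^{1/2}\p_t\Lambda^{2+(j-1)/(\ell+1)}\vpi\,\p^{N-1}w\|_0$ that \reff{ind} also records. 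This is exactly the computation \reff{4.15+1} with $N$ replaced by $N-1$; it is precisely the structure your strategy discards by absorbing $\p_t$ into the Sobolev index.

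Two secondary points. First, writing $\|\p^\beta\tilde k\|_{L^\infty}\leq C_{\tilde k}^{|\beta|+1}(|\beta|!)^{\ell+1}$ presupposes $\tilde k\in G^{\ell+1}$, but $\tilde k(s,t)=k(s,w(s,t))$ and $w$ is only $C^\infty$; these bounds must themselves be derived from \reff{ind} via Fa\`a di Bruno, which is what Corollary~\ref{0811012} does (the paper then works with $H^{2+(j-1)/(\ell+1)}$-norms of $\vpi\p^m\tilde k$ and $\vpi\p^m\tilde k_w$, not $L^\infty$). Second, your claim that for $|\beta|>[N/2]$ the factor $\p^{N-1-\beta}\p_t w$ ``falls into $[w]_{[N/2],\bar B}$, uniformly bounded since $w\in C^\infty(\bar B)$'' is not correct: $[w]_{[N/2],\bar B}$ can grow without bound as $N\to\infty$, so $C_2$ would depend on $N$. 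These low-order factors too must be estimated through \reff{ind}, and the resulting $L$-powers tracked.
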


Here and throughout the proof, $C$ and $C_j$ are used to denote
suitable constants which depend on $\ell,$ $\big[\tilde
k\big]_{0,B},  \com{w}_{8,\bar B}$  and the Gevrey constant of $k,$
but it is independent of $m$ and $L.$

\begin{proof}[{\bf Proof of Proposition \ref{0810272}}]
The proof is by induction on $m.$   Firstly by using
\reff{cutoffnorm}, the direct calculus implies, for $m=5,$ all
$0<\rho<1$ and all integers $j$ with $0\leq j\leq \ell+1,$
\begin{align*}
    \norm{\vpi_{\rho, m}\p^m w}_{2+\frac{j}{\ell+1}}
    +\norm{\p_s\Lambda^{2+\frac{j-1}{\ell+1}}\vpi_{\rho, m}\p^m w}_{0}
    +\norm{\tilde k^{\frac{1}{2}}\p_t\Lambda^{2+\frac{j-1}{\ell+1}}\vpi_{\rho, m}\p^m
    w}_{0}
    \leq \frac{M_1}{\rho^{4}}
  \end{align*}
with $M_1$ a constant depending only on $\big[\tilde k\big]_{0,B},
\com{w}_{8,\bar B}$ and the constant $C$ in \reff{cutoffnorm}.  Then
\reff{ind} obviously holds for $m\leq 5$  if we choose $L\geq M_1$.

Now we can finish the proof of Proposition \ref{0810272} by induction, for any $N>5$
$$
\mbox{{\bf Claim} :\em \reff{ind} is true for $m=N$ if it is true for all $3\leq m\leq N-1$.}
$$

We prove this claim again by induction on $j$, for $0\leq j\leq \ell+1$.

\noindent{\bf Case of $j=0$}: We apply Remark \ref{remark2.4} with
$r=2-\frac{1}{\ell+1}$, $m=1$ and $v=\vpi_{\rho, N}\p^{N-1} w\in
C^\infty_0(B)$,
 \begin{align*}
       &\|\varphi_{\rho, N}\p^N w\|^2_2
       +\|\p_s\Lambda^{2-\frac{1}{\ell+1}}\vpi_{\rho, N}\p^N w\|^2_{0}
       +\|\tilde k^{\frac{1}{2}}\p_t\Lambda^{2-\frac{1}{\ell+1}}\vpi_{\rho, N}\p^N w\|^2_{0}\\
       &\leq \|\vpi_{\rho, N}\p^{N-1} w\|^2_{2+1}
       +\|\p_s\Lambda^{2-\frac{1}{\ell+1}}\p^1\big(\vpi_{\rho, N}\p^{N-1} w\big)\|^2_0\\
       &\quad+\|\tilde k^{\frac{1}{2}}\p_t\Lambda^{2-\frac{1}{\ell+1}}\p^{1}\big(\vpi_{\rho, N}
       \p^{N-1}w\big)\|^2_{0}+C\norm{\big(\p^{1}\vpi_{\rho, N}\big)\p^{N-1} w}^2_{3-\frac{1}{\ell+1}}\\
       &\leq C_3\set{\norm{\cl \vpi_{\rho, N}\p^{N-1}w}^2_{3-\frac{2}{\ell+1}}+
       \|\vpi_{\rho, N}\p^{N-1} w\|^2_0 +
       \norm{\inner{\p^{1}\vpi_{\rho, N}}\p^{N-1}
       w}^2_{3-\frac{1}{\ell+1}}}.
     \end{align*}
     By the induction assumption, we use now Lemma \ref{lemm2.2},
     to get
    \begin{align*}
    \norm{\cl \vpi_{\rho, N}\p^{N-1} w}_{3-\frac{2}{\ell+1}}&=
    \norm{\cl \vpi_{\rho, N}\p^{N-1} w}_{2+\frac{\ell-1}{\ell+1}}\\
    &\leq
    \frac{ C_2 L^{N-3}}{{\rho}^{(\ell+1)(N-4)}}\inner{\frac{N-1}{\rho}}^{\ell+1}
    \com{\inner{N-4}!}^{\ell+1}\\
    &\leq \frac{2^{\ell+1}C_2 L^{N-3}}{{\rho}^{(\ell+1)(N-3)}}\com{\inner{N-3}!}^{\ell+1}.
  \end{align*}
  Hence the proof will be complete if we can show that (the term $\|\vpi_{\rho, N}\p^{N-1} w\|_0$
  is easier  to treat)
  \begin{equation}\label{2.15}
    \norm{\inner{\p^{1}\vpi_{\rho, N}}\p^{N-1}w}_{3-\frac{1}{\ell+1}}
    \leq\frac{C_4 L^{N-3}}{{\rho}^{(\ell+1)(N-3)}}\com{\inner{N-3}!}^{\ell+1}.
  \end{equation}
Setting $\rho_1=\frac{(N-1)\rho}{N}$, then for any $k\geq2$,
$$
\vpi_{\rho_1, k}=1, \quad \mbox{on} \quad\quad B_{\rho_1},
$$
which implies that $\vpi_{\rho_1, k}=1$ on the Supp $\vpi_{\rho,
N}\subset B_{\rho_1}$ for any $k\geq2$. From \reff{cutoffnorm}, we
have
     \begin{align*}
        &\norm{\inner{\p^{1}\vpi_{\rho, N}}\p^{N-1}w}_{3-\frac{1}{\ell+1}}
        =\norm{\inner{\p^{1}\vpi_{\rho, N}} \vpi_{\rho_1, N-1}\p^{N-1} w}_{2+\frac{\ell}{\ell+1}}\\
        &\leq C_5\set{\inner{\frac{N}{\rho}}\norm{\vpi_{\rho_1, N-1}
        \p^{N-1}w}_{2+\frac{\ell}{\ell+1}}+
        \inner{\frac{N}{\rho}}^{4-\frac{1}{\ell+1}}\norm{\vpi_{\rho_1, N-1}\p^{N-1}w}_{0}}.
     \end{align*}
On the other hand, the induction assumption  with $m=N-1, j=\ell, 0\leq \rho_1\leq 1$, yields
    \begin{align*}
        \frac{N}{\rho}\|\vpi_{\rho_1, N-1}
        \p^{N-1}w\|_{2+\frac{\ell}{\ell+1}}&\leq \frac{N}{\rho}
        \frac{L^{N-3}}{{\rho_1}^{(\ell+1)(N-4)}}\Big(\frac{N-1}{\rho_1}\Big)^\ell
        \com{\inner{N-4}!}^{\ell+1}\\
        &\leq (2 e)^{\ell+1}
        \frac{L^{N-3}}{{\rho}^{(\ell+1)(N-3)}} \com{\inner{N-3}!}^{\ell+1}.
    \end{align*}
Setting now $\tilde{\rho}_1=\frac{(N-2)\rho_1}{N-1}$, then for any
$k\geq 2$,
$$
\vpi_{\tilde{\rho}_1, k}=1, \quad \mbox{on} \quad\quad
B_{\tilde{\rho}_1},
$$
which implies that $\vpi_{\tilde{\rho}_1, k}=1$ on the Supp
$\vpi_{\rho_1, N}\subset B_{\tilde{\rho}_1}$ for any $k\geq 2$. The
induction assumption  with $m=N-3, j=0, 0\leq \tilde{\rho}_1\leq 1$,
yields
    \begin{align*}
       \inner{\frac{N}{\rho}}^{4-\frac{1}{\ell+1}}\norm{\vpi_{\rho_1, N-1}\p^{N-1}w}_{0}
       &=\inner{\frac{N}{\rho}}^{4-\frac{1}{\ell+1}}\norm{\vpi_{\rho_1, N-1}\p^2
       \vpi_{\tilde{\rho}_1, N-3} \p^{N-3}w}_{0}\\
       &\leq \inner{\frac{N}{\rho}}^{4-\frac{1}{\ell+1}}
       \norm{\vpi_{\tilde{\rho}_1, N-3} \p^{N-3}w}_{2}\\
       &\leq\inner{\frac{N}{\rho}}^{4-\frac{1}{\ell+1}}
       \frac{L^{N-5}}{{\tilde{\rho}_1}^{(\ell+1)(N-6)}}\com{\inner{N-6}!}^{\ell+1}\\
        &\leq C_\ell \frac{L^{N-5}}{{\rho}^{(\ell+1)(N-3)}}\com{\inner{N-3}!}^{\ell+1},
    \end{align*}
where we have used the fact that
$$
3(\ell+1)-4+\frac{1}{\ell+1}\geq 0,\quad \forall \,\, \ell\geq 0.
$$
Therefore, we get \reff{2.15} with $C_4=C_5((2 e )^{\ell+1}+2C_\ell$, and finally
for all $0<\rho<1,$
  \begin{align}\label{2.16}
  \begin{split}
    \norm{\vpi_{\rho, N}\p^N w}_{2}
   &+\norm{\p_s\Lambda^{2-\frac{1}{\ell+1}}\vpi_{\rho, N}\p^N w}_{0}
    +\norm{\tilde k^{\frac{1}{2}}\p_t\Lambda^{2-\frac{1}{\ell+1}}\vpi_{\rho, N}\p^N w}_{0}\\
    &\quad\quad\leq \frac{L^{N-2}}{{\rho}^{(\ell+1)(N-3)}}\com{\inner{N-3}!}^{\ell+1},
  \end{split}
  \end{align}
if we choose
$$
L\geq 2 C_3^{1/2}\big(2^{\ell+1} C_2+C_4\big).
$$

\bigbreak \noindent{\bf We prove now that \reff{ind} is true for
{$m=N$ and} $j=j_0+1$  if it is true for {$m=N$ } and $0\leq j \leq
j_0$}.  We apply \reff{08110105} with $r=2+\frac{j_0}{\ell+1}$ and
$v=\vpi_{\rho, N}\p^{N} w\in C^\infty_0(B)$,
\begin{align*}
       &\|\varphi_{\rho, N}\p^N w\|^2_{2+\frac{j_0+1}{\ell+1}}
       +\|\p_s\Lambda^{2+\frac{j_0}{\ell+1}}\vpi_{\rho, N}\p^N w\|^2_{0}
       +\|\tilde k^{\frac{1}{2}}\p_t\Lambda^{2+\frac{j_0}{\ell+1}}\vpi_{\rho, N}\p^N w\|^2_{0}\\
       &\leq C_3\set{\norm{\cl \vpi_{\rho, N}\p^{N}w}^2_{2+\frac{j_0-1}{\ell+1}}+
       \|\vpi_{\rho, N}\p^{N} w\|^2_0}.
\end{align*}
Firstly,
\begin{align*}
\|\vpi_{\rho, N}\p^{N} w\|^2_0\leq \|\vpi_{{\rho_1}, N-2}\p^{N-2}
w\|_2&\leq
\frac{L^{N-4}}{{\rho_1}^{(\ell+1)(N-5)}}\Big(\inner{N-5}!\Big)^{\ell+1}\\
&\leq e^{2(\ell+1)}
\frac{L^{N-4}}{{{\rho}}^{(\ell+1)(N-5)}}\Big(\inner{N-5}!\Big)^{\ell+1}.
\end{align*}

Now for the term $\|\cl \vpi_{\rho, N}\p^{N}w\|_{2+\frac{j_0-1}{\ell+1}}$,
we are exactly in the hypothesis of Lemma \ref{lemm2.1}, \reff{2.12}
implies that
\begin{align*}
       &\|\varphi_{\rho, N}\p^N w\|_{2+\frac{j_0+1}{\ell+1}}
       +\|\p_s\Lambda^{2+\frac{j_0}{\ell+1}}\vpi_{\rho, N}\p^N w\|_{0}
       +\|\tilde k^{\frac{1}{2}}\p_t\Lambda^{2+\frac{j_0}{\ell+1}}\vpi_{\rho, N}\p^N w\|_{0}\\
       &\leq C_3^{1/2}\frac{(C_1+e^{\ell+1}) L^{N-3}}{{\rho}^{(\ell+1)(N-3)}}
    \inner{\frac{N}{\rho}}^{j_0+1}\Big(\inner{N-3}!\Big)^{\ell+1}.
\end{align*}
Finally, if we choose
$$
L\geq \max\left\{M_1,\,2 C_3^{1/2}\big(2^{\ell+1} C_2+C_4\big),\, C_3^{1/2}(C_1+e^{\ell+1}) \right\},
$$
{we get the validity of \reff{ind} for $j=j_0+1,$ and hence for all
$0\leq j\leq \ell+1.$}  Thus the proof  of Proposition \ref{0810272}
is completed.
\end{proof}


\section{Gevrey regularity of solutions for Monge-Amp\`{e}re
equations}\label{0811032}

In this section we prove Theorem \ref{main}.  In the following
discussions, we always assume $u(x,y)$ is a smooth solution of the
 Monge-Amp\`{e}re equation \reff{MA} and $u_{yy}>0$ in
$\Omega,$ a neighborhood of the origin.

We  first introduce the classic partial Legendre transformation (e.g.
\cite{MR1079936}) to translate the Gevrey regularity problem to the
divergence form quasi-linear equation \reff{quasi}. Define the
transformation $T:(x,y)\longrightarrow(s,t)$ by setting
\begin{equation}\label{Len}
  \left\{
   \begin{array}{lll}
      s&=&x,\\
      t&=&u_y.
   \end{array}
  \right.
\end{equation}
It is easy to verify that
\begin{align*}
  J_T=\left(
   \begin{array}{cc}
     s_x&s_y\\
     t_x&t_y
   \end{array}
  \right)=\left(
   \begin{array}{cc}
     1&0\\
     u_{xy}&u_{yy}
   \end{array}
  \right),
  \intertext{and}
  J_T^{-1}=\left(
   \begin{array}{cc}
     x_s&x_t\\
     y_s&y_t
   \end{array}
  \right)=\left(
   \begin{array}{cc}
     1&0\\
     -\frac{u_{xy}}{u_{yy}}&\frac{1}{u_{yy}}
   \end{array}
  \right).
\end{align*}
Thus if $u\in C^\infty$ and $u_{yy}>0$ in $\Omega$, then the
transformations
\[
  T: \Omega\longrightarrow T(\Omega), \quad
  T^{-1}: T(\Omega)\longrightarrow \Omega
\]
are $C^\infty$ diffeomorphism. In \cite{MR1488238}, P. Guan proved
that if $u(x,y)\in C^{1, 1}(\Omega)$ is a weak solution of the
Monge-Amp\`{e}re equation \reff{MA} and $u_{yy}>0$ in $\Omega,$
then $y(s,t)\in C^{0, 1} \inner{T(\Omega)}$ is a weak solution of
equation
  \begin{equation}\label{lin}
    \p_{ss}y+\p_t\Big\{k(s,y(s,t))\p_ty\Big\}=0.
  \end{equation}
He proved also the smoothness of $y(s,t)\in C^\infty(T(\Omega))$
and $u\in C^\infty(\Omega)$.

We prove now the following theorem which, together with Theorem
\ref{08111508}, implies immediately Theorem \ref{main}.

\begin{thm}\label{Gev}
Let $y(s,t)\in G^{\ell+1}\inner{T(\Omega)}$ be a solution of
equation \reff{lin}. Assume that $k(x,y)\in G^{\ell+1}(\Omega).$
Then $u(x,y)\in G^{\ell+1}\inner{\Omega}.$
\end{thm}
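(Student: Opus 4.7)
The plan is to transfer Gevrey regularity from $y(s,t)$ back to $u(x,y)$ through the partial Legendre diffeomorphism $T$. First I would observe that the inverse map $T^{-1}(s,t)=\inner{s,\,y(s,t)}$ has both components in $G^{\ell+1}\inner{T(\Omega)}$ by hypothesis, and that its Jacobian $y_t=1/u_{yy}$ is strictly positive since $u_{yy}\geq c_0>0$. Invoking the standard Gevrey inverse function theorem (the inverse of a $G^{\sigma}$ diffeomorphism with non-vanishing Jacobian is again in $G^{\sigma}$), one deduces that $T:\Omega\to T(\Omega)$ is a $G^{\ell+1}$ diffeomorphism. In particular, the second coordinate of $T(x,y)$, which is exactly $u_y(x,y)$, belongs to $G^{\ell+1}(\Omega)$.

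Once $u_y\in G^{\ell+1}(\Omega)$ is established, its partial derivatives $u_{xy}$ and $u_{yy}$ lie in $G^{\ell+1}(\Omega)$ as well, since Gevrey classes are stable under differentiation. Because $u_{yy}\geq c_0>0$, the reciprocal $1/u_{yy}$ also belongs to $G^{\ell+1}(\Omega)$ (Gevrey classes are algebras and are closed under composition with analytic functions on non-vanishing targets). Combining this with the hypothesis $k\in G^{\ell+1}(\Omega)$, the Monge-Amp\`ere equation \reff{MA} can be solved algebraically for $u_{xx}$:
\[
u_{xx}=\frac{k(x,y)+u_{xy}^2}{u_{yy}}\in G^{\ell+1}(\Omega).
\]
Hence every second derivative of $u$ lies in $G^{\ell+1}(\Omega)$.

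The passage from the second derivatives to $u$ itself is routine: for any multi-index $\alpha$ with $\abs{\alpha}\geq 2$ one writes $\p^\alpha u=\p^{\alpha-\beta}\p^\beta u$ for some $\abs\beta=2$, and the Gevrey bound on $\p^\beta u$ with index $\alpha-\beta$ translates, after adjusting the Gevrey constant, into a Gevrey bound of type $C^{\abs\alpha+1}\inner{\abs\alpha!}^{\ell+1}$ for $\p^\alpha u$. The remaining cases $\abs\alpha\leq 1$ are controlled by the $C^\infty$-regularity of $u$ on compact subsets of $\Omega$ (established by Guan). This yields $u\in G^{\ell+1}(\Omega)$.

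The main technical obstacle is the very first step, namely showing that the inverse of a Gevrey diffeomorphism is again Gevrey with the same index. This amounts to controlling the iterated derivatives of the inverse map via Fa\`a di Bruno's formula while preserving the $\inner{\abs\alpha!}^{\ell+1}$ growth rate, and it is the one place in the argument where one cannot rely solely on the formal algebra and stability properties of $G^{\ell+1}$ but must instead carry out an explicit combinatorial estimate (or cite the corresponding classical result).
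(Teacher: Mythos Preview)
Your argument is correct, but it takes a genuinely different route from the paper. You reduce everything to the Gevrey inverse function theorem: since $T^{-1}(s,t)=(s,y(s,t))$ is a $G^{\ell+1}$ diffeomorphism with non-vanishing Jacobian, its inverse $T(x,y)=(x,u_y(x,y))$ is also $G^{\ell+1}$, and the rest follows from the algebra of Gevrey functions together with the Monge--Amp\`ere relation. The paper, by contrast, never invokes an inverse function theorem. It first observes (exactly as you do) that $F_m=\p^2 u\circ T^{-1}$ lies in $G^{\ell+1}(T(\Omega))$ for each second derivative, but then exploits the composition identities $u_{xy}(x,y)=F_1(x,u_y(x,y))$ and $u_{yy}(x,y)=F_3(x,u_y(x,y))$ together with Friedman's composition lemma (Lemma~\ref{com}) in a two-stage induction: first on the number of pure $x$-derivatives of $u_y$ and $u_x$ (Proposition~\ref{prop3.4}), then on the total order $\abs\alpha$ of $\p^\alpha u$. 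In effect, the paper's induction is a hands-on proof of the Gevrey inverse function theorem in this particular triangular setting, trading your conceptual brevity for self-containedness: Friedman's lemma is quoted with an explicit reference, whereas the Gevrey inverse function theorem, as you yourself flag in your last paragraph, would require either an external citation or precisely the Fa\`a di Bruno bookkeeping that the paper carries out directly.
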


We begin with the following results, which can be found in Rodino's
book \cite{Rodino93} (page 21).

\begin{lem}\label{ProGev}
  If $g(z),h(z)\in G^{\ell+1}\inner{U},$  then $(g\,h)(z)\in
  G^{\ell+1}\inner{U},$ and moreover $
  \frac{1}{g(z)}\in G^{\ell+1}\inner{U}$ if $g(z)\neq 0$.
  If $H\in G^{\ell+1}(\Omega)$ and the mapping
  $v: U\longrightarrow\Omega$ is $G^{\ell+1}(U),$
  then $H(v(\cdot))\in G^{\ell+1}(U).$
\end{lem}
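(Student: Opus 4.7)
The plan is to prove the three closure properties in the order they are stated. Each reduces to a combinatorial estimate of the form ``a weighted sum of products $\prod(|\beta_j|!)^{\ell+1}$ over partitions of $\alpha$ is controlled by $\tilde C^{|\alpha|+1}(|\alpha|!)^{\ell+1}$''. Throughout, I fix an arbitrary compact set $K\subset U$ and let $C$ denote a common Gevrey constant valid on $K$ (and on $v(K)\subset\Omega$ for the composition statement).

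For the product $gh$, apply the Leibniz rule
\[
\partial^\alpha(gh)=\sum_{\beta\leq\alpha}\binom{\alpha}{\beta}\,\partial^\beta g\cdot\partial^{\alpha-\beta}h,
\]
and estimate each summand by $C^{|\alpha|+2}\binom{\alpha}{\beta}(|\beta|!)^{\ell+1}(|\alpha-\beta|!)^{\ell+1}$. The crucial inequality $\binom{\alpha}{\beta}|\beta|!|\alpha-\beta|!\leq|\alpha|!$, which follows from $\binom{\alpha}{\beta}\leq\binom{|\alpha|}{|\beta|}$, gives $\binom{\alpha}{\beta}(|\beta|!)^{\ell+1}(|\alpha-\beta|!)^{\ell+1}\leq(|\alpha|!)^{\ell+1}$, and the number of $\beta\leq\alpha$ is at most $(|\alpha|+1)^d\leq 2^{|\alpha|+d}$, which is absorbed into an enlarged Gevrey constant $\tilde C=2C$.

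For the reciprocal, $1/g$ is smooth on $K$ since $g\neq 0$ there; differentiating the identity $g\cdot(1/g)=1$ via Leibniz produces the recursion
\[
\partial^\alpha(1/g)=-\frac{1}{g}\sum_{\beta<\alpha}\binom{\alpha}{\beta}\,\partial^{\alpha-\beta}g\cdot\partial^\beta(1/g),\qquad |\alpha|\geq 1.
\]
An induction on $|\alpha|$ then establishes $\|\partial^\alpha(1/g)\|_{L^\infty(K)}\leq M^{|\alpha|+1}(|\alpha|!)^{\ell+1}$ for $M$ sufficiently large. The inductive step uses the same combinatorial bound as for the product, together with the choice of $M$ large enough (depending on $\|1/g\|_{L^\infty(K)}$ and $C$) so that the geometric-type sum $\sum_{\beta<\alpha}C^{|\alpha-\beta|}M^{-|\alpha-\beta|}$ closes up.

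The composition claim is the main obstacle. The tool is the multivariable Fa\`a di Bruno formula, which writes
\[
\partial^\alpha(H\circ v)(x)=\sum_{k=1}^{|\alpha|}\sum_{(\mu,\beta_1,\ldots,\beta_k)}c_{\alpha,\mu,\beta_1,\ldots,\beta_k}\,(\partial_y^\mu H)(v(x))\prod_{j=1}^{k}\partial^{\beta_j}v_{i_j}(x),
\]
the inner sum being over tuples with $|\mu|=k$ and $\beta_1+\cdots+\beta_k=\alpha$ (each $|\beta_j|\geq 1$). Inserting the Gevrey bounds for $H$ on $v(K)$ and for $v$ on $K$ bounds each summand by $C^{|\alpha|+2}(k!)^{\ell+1}\prod_{j=1}^{k}(|\beta_j|!)^{\ell+1}$. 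The combinatorial heart of the argument is that the Fa\`a di Bruno coefficients satisfy $c_{\alpha,\mu,\beta_1,\ldots,\beta_k}\leq|\alpha|!/\bigl(k!\,|\beta_1|!\cdots|\beta_k|!\bigr)$, so that $c\cdot k!\prod|\beta_j|!\leq|\alpha|!$; the residual factor $(k!)^\ell\prod(|\beta_j|!)^\ell$ is then controlled by $(|\alpha|!)^\ell$ via the multinomial inequality $k!\prod|\beta_j|!\leq|\alpha|!$, and the total number of partitions of $\alpha$ (bounded by the Bell number and hence exponentially in $|\alpha|$) is absorbed into the final constant $\tilde C^{|\alpha|+1}$. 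This bookkeeping, where the Gevrey index $\ell+1\geq 1$ is essential to close the $\ell$-th power of the multinomial, is the technical heart of the lemma.
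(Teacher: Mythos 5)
Your proposal is correct in substance, but note that the paper does not prove this lemma at all: it is quoted directly from Rodino's book (page 21), so your self-contained argument is doing work the authors delegate to a reference. The three steps you give --- Leibniz for the product, the recursion $g\,\partial^\alpha(1/g)=-\sum_{\beta<\alpha}\binom{\alpha}{\beta}\partial^{\alpha-\beta}g\cdot\partial^\beta(1/g)$ closed by induction with $M$ large, and Fa\`a di Bruno for the composition --- are the standard route (and essentially what the cited source does), and the combinatorial inequalities you isolate, namely $\binom{\alpha}{\beta}\,|\beta|!\,|\alpha-\beta|!\le|\alpha|!$ and $k!\prod_{j}|\beta_j|!\le|\alpha|!$ for decompositions of $\alpha$ into nonzero parts, each raised to the power $\ell+1$ using $\ell+1\ge 1$, are exactly the right ones.

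Two points in the composition step need repair, though neither is fatal. First, your count of terms is misjustified: the Bell numbers are \emph{not} exponentially bounded, so ``bounded by the Bell number and hence exponentially in $|\alpha|$'' is false as written. What actually saves you is that you are using the multiplicity form of Fa\`a di Bruno, in which the sum runs over ordered tuples $(\beta_1,\dots,\beta_k)$ with $\beta_1+\cdots+\beta_k=\alpha$ and over indices $(i_1,\dots,i_k)\in\{1,\dots,d\}^k$, the set-partition multiplicities having been absorbed into the coefficient $c=\alpha!/(k!\,\beta_1!\cdots\beta_k!)$; the number of such tuples is at most of order $|\alpha|\,2^{(d+1)|\alpha|}d^{|\alpha|}$, which is genuinely exponential. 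Second, the bound $c\le|\alpha|!/(k!\prod_j|\beta_j|!)$ is not immediate from $c=\alpha!/(k!\prod_j\beta_j!)$, since replacing $\alpha!$ by $|\alpha|!$ and $\beta_j!$ by $|\beta_j|!$ pull in opposite directions; it reduces to the generalized Vandermonde inequality $\prod_j\binom{|\beta_j|}{\beta_j}\le\binom{|\alpha|}{\alpha}$ for $\sum_j\beta_j=\alpha$, which holds but should be stated. With these two clarifications the argument is complete.
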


We study now the stability of Gevrey regularity by non linear
composition. The following result is due to Friedman
\cite{Friedman58}.
\begin{lem}[Lemma 1 of \cite{Friedman58}]\label{com}
  Let $M_j$ be a sequence of positive numbers satisfying the
  following monotonicity condition:
  \begin{equation}\label{0810181}
    \frac{j!}{i!(j-i)!}M_iM_{j-i}\leq C^*M_j, ~~(i=1,2,\cdots,j;~~
    j=1,2,\cdots)
  \end{equation}
  with $C^*$ a constant. Let $F(z,p)$ be a smooth function defined on
  $\Omega\times(-b,b)\subset\rr^2\times\rr$ satisfying that, for some constant $C,$
  \begin{equation*}
    \max\limits_{(z,p)\in \Omega\times(-b,b)}\abs{\p_z^\gamma\p^i_p F(z,p)}\leq
    C^{\abs\gamma+i}
    M_{\abs\gamma-2}M_{i-2},
  \end{equation*}
for all $\gamma\in\zz_+^2, i\in \zz_+$ with $\abs\gamma, i\geq 2$.
Then there exist two constants $\tilde C,
  C_*,$ depending only on the above constants $C^*$ and $C,$ such that for every
  $H_0,H_1>1$ with $H_1\geq \tilde C H_0,$ if the smooth function $\xi(z)$
  satisfies that $\max\limits_{z\in \Omega}\abs{\xi(z)}<b$ and that
  \begin{align}
     \label{c1}\max\limits_{z\in B}\abs{\p_{z}^\beta\xi(z)}&\leq H_0,\quad
     \textrm{for }~~ \beta ~~\textrm{with}~~ \abs\beta\leq1,\\
     \label{c2}\max\limits_{z\in B}\abs{\p_{z}^\beta\xi(z)}&\leq H_0H_1^{\abs\beta-2}
     M_{\abs\beta-2},
     \quad\textrm{for all}~~ \beta\in\zz_+^2 ~~\textrm{with}~~ 2\leq \abs\beta\leq N,
  \end{align}
  where $N\geq2$ is a given integer,
  then for all $\alpha\in\zz_+^2$ with $\abs\alpha=N,$
  \begin{equation*}
    \max\limits_{z\in B}\abs{\p_{z}^\alpha \inner{F(z,\xi(z))}}\leq
    C_*H_0H_1^{N-2}M_{N-2}.
  \end{equation*}
\end{lem}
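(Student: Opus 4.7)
The plan is to establish the estimate by strong induction on $N$. The base case $N=2$ is handled directly: the chain rule expresses $\p_z^\alpha F(z,\xi(z))$ as a finite sum of terms of the form $(\p_z^\gamma \p_p^k F)(z,\xi(z))\cdot \prod_{j=1}^k \p_z^{\beta_j}\xi(z)$ with $|\gamma|+\sum_j|\beta_j|=2$ and $k\le 2$, and each factor is controlled either by the hypothesis on $F$ or by \reff{c1}, yielding the desired bound with the convention $M_{-1}=M_0=1$.

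For the inductive step with $|\alpha|=N\ge 3$, I would apply Fa\`a di Bruno's formula to write
\[
\p_z^\alpha\inner{F(z,\xi(z))}=\sum c_{\gamma,\beta_1,\dots,\beta_k}\,(\p_z^\gamma\p_p^k F)(z,\xi(z))\,\prod_{j=1}^k\p_z^{\beta_j}\xi(z),
\]
where the sum is over $\gamma\le\alpha$, $k\ge 0$, and multi-indices $\beta_j$ with $|\beta_j|\ge 1$ and $\gamma+\sum_j\beta_j=\alpha$. In each term I would separate the factors $\p_z^{\beta_j}\xi$ into those with $|\beta_j|\le 1$ (bounded by $H_0$ via \reff{c1}) and those with $|\beta_j|\ge 2$ (bounded by $H_0 H_1^{|\beta_j|-2} M_{|\beta_j|-2}$ via \reff{c2}). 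Bounding $|\p_z^\gamma\p_p^k F|$ by $C^{|\gamma|+k}M_{|\gamma|-2}M_{k-2}$ and iteratively applying the monotonicity condition \reff{0810181}, one can consolidate the resulting product of $M$-values into a single $M_{N-2}$ at the cost of a multiplicative factor $(C^*)^{r-1}$ and multinomial denominators that partially cancel the Fa\`a di Bruno weights $c_{\gamma,\beta_1,\dots,\beta_k}$.

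The delicate step is converting the $H_0^k$ coming from the $k$ factors $\p_z^{\beta_j}\xi$ into the single $H_0$ required in the conclusion. Writing $H_0^k=H_0\cdot(H_0/H_1)^{k-1}\cdot H_1^{k-1}$ and using the hypothesis $H_1\ge\tilde C H_0$ with $\tilde C$ large, each extra factor $H_0/H_1\le 1/\tilde C$ damps the contribution from terms of Fa\`a di Bruno depth $k$, while the surplus $H_1^{k-1}$ combines with the powers $H_1^{|\beta_j|-2}$ into $H_1^{N-2}$. The main obstacle is choosing $\tilde C$ so that the resulting sum over Fa\`a di Bruno configurations converges uniformly in $N$: one needs $\tilde C$ to dominate the product of $C$, $C^*$, and a universal constant reflecting the combinatorial growth of the number of partitions of a size-$N$ multi-index. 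With $\tilde C$ so chosen, the sum is bounded by $C_* H_0 H_1^{N-2} M_{N-2}$ for some $C_*$ depending only on $C$ and $C^*$, which closes the induction.
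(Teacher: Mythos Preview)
The paper does not prove this lemma; it is quoted verbatim as Lemma~1 of Friedman~\cite{Friedman58} and used as a black box. The only place the paper indicates how such a result is proved is in the sketch for the analogous Lemma~\ref{081101}, where it writes out the Fa\`a di Bruno expansion, bounds each factor, and refers back to Friedman for the combinatorics. Your proposal follows exactly this route---Fa\`a di Bruno, termwise bounds from the hypotheses on $F$ and $\xi$, iterated use of the monotonicity condition~\reff{0810181} to collapse the $M$-factors, and the damping $H_0/H_1\le 1/\tilde C$ to absorb the extra $H_0$'s---so it is the same argument in spirit.

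Two small remarks. First, the ``strong induction on $N$'' framing is unnecessary: your inductive step never invokes the inductive hypothesis, since the assumptions~\reff{c1}--\reff{c2} already control \emph{all} derivatives of $\xi$ up to order $N$, and Fa\`a di Bruno gives the bound for $|\alpha|=N$ directly. Second, when you collapse the $M$-factors the sum of their indices is in general strictly less than $N-2$ (by roughly $k-m+2$, where $m$ counts the $|\beta_j|=1$ factors), so you land on some $M_j$ with $j\le N-2$ rather than $M_{N-2}$ itself; passing to $M_{N-2}$ requires either a mild monotonicity of the sequence $(M_j)$ or absorbing the discrepancy into the surplus $H_1$-powers. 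In Friedman's argument (and in every application in this paper, where $M_j=(j!)^{\ell+1}$) this is harmless, but it is worth making the step explicit.
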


\begin{rmk}\label{remark}
Under the same assumptions as the above lemma, if we replace
\reff{c1} and \reff{c2}, respectively, by
\begin{align*}
     \max\limits_{z\in \Omega}\abs{\p_{z_i}^m\xi(z)}&\leq H_0,\quad
     \textrm{for }~~ m\leq 1,\\
     \max\limits_{z\in \Omega}\abs{\p_{z_i}^m\xi(z)}&\leq H_0H_1^{m-2}
     M_{m-2},
     \quad\textrm{for all}~~ m\in\zz_+ ~~\textrm{with}~~ 2\leq m\leq N,
  \end{align*}
  with $1\leq i\leq 2$ some fixed integer and  $N\geq2$ a given integer,
  then
  \begin{equation*}
    \max\limits_{z\in \Omega}\abs{\p_{z_i}^N \inner{F(z,\xi(z))}}\leq
    C_*H_0H_1^{N-2}M_{N-2}.
  \end{equation*}

\end{rmk}

We prepare firstly two propositions. In the follows, let $K$ be any fixed
compact subset of $\Omega.$

\begin{prop}
Assume that $y(s,t)\in G^{\ell+1}(T(\Omega))$ and $k(x, y)\in G^{\ell+1}(\Omega)$, then the
functions $F_m(s,t)\in G^{\ell+1}\inner{T(\Omega)},m=1,2,3,$ where
\begin{align*}
     F_1(s,t)&=(u_{xy}\circ T^{-1})(s,t)=u_{xy}(x(s,t),y(s,t)),\\
     F_2(s,t)&=(u_{xx}\circ T^{-1})(s,t)=u_{xx}(x(s,t),y(s,t)),\\
     \intertext{and}
     F_3(s,t)&=(u_{yy}\circ T^{-1})(s,t)=u_{yy}(x(s,t),y(s,t)).
\end{align*}
\end{prop}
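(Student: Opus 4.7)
The plan is to express $F_1$ and $F_3$ directly in terms of the partial derivatives of $y(s,t)$, and $F_2$ in terms of $F_1$, $F_3$, and the composition $k\circ T^{-1}$; after that the proposition reduces to a straightforward application of Lemma \ref{ProGev}. From the explicit formula for $J_T^{-1}$ displayed above, one reads off
\begin{equation*}
y_t(s,t)=\frac{1}{u_{yy}(x,y)},\qquad y_s(s,t)=-\frac{u_{xy}(x,y)}{u_{yy}(x,y)},
\end{equation*}
which gives the algebraic identities
\begin{equation*}
F_3=\frac{1}{y_t},\qquad F_1=-\frac{y_s}{y_t}.
\end{equation*}

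Since $y(s,t)\in G^{\ell+1}(T(\Omega))$ by hypothesis, both $y_s$ and $y_t$ belong to $G^{\ell+1}(T(\Omega))$; and because $u_{yy}\geq c_0>0$ in $\Omega$, the function $y_t=1/u_{yy}$ stays bounded away from zero on $T(\Omega)$. Lemma \ref{ProGev} (the reciprocal, product and quotient assertions) then yields at once $F_3,F_1\in G^{\ell+1}(T(\Omega))$.

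For $F_2$ I would invoke the Monge-Amp\`ere equation \reff{MA} itself. Solving it for $u_{xx}$ and composing with $T^{-1}$ gives
\begin{equation*}
F_2(s,t)=\frac{k(s,y(s,t))+F_1(s,t)^2}{F_3(s,t)}.
\end{equation*}
The map $v(s,t)=(s,y(s,t))$ has both components in $G^{\ell+1}(T(\Omega))$, so the composition statement in Lemma \ref{ProGev} applied to $k\in G^{\ell+1}(\Omega)$ yields $k\circ T^{-1}\in G^{\ell+1}(T(\Omega))$. A final application of the product and quotient parts of Lemma \ref{ProGev} (using once more $F_3=1/y_t\neq 0$) then delivers $F_2\in G^{\ell+1}(T(\Omega))$.

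There is no real obstacle in this proposition: the Legendre identity reduces $F_1$ and $F_3$ to rational expressions in the derivatives of $y$, and the Monge-Amp\`ere equation itself eliminates $u_{xx}$ in favor of $k$, $F_1$ and $F_3$, so that everything is handled by Lemma \ref{ProGev} alone. The genuinely nonlinear composition estimate of Lemma \ref{com}, with its monotonicity condition \reff{0810181}, is not needed here; I expect it to enter in the companion proposition and in the proof of Theorem \ref{Gev}, when one has to pass back from Gevrey control of the $F_m$ on $T(\Omega)$ to Gevrey control of $u$ on $\Omega$ through the inverse transformation $T^{-1}$.
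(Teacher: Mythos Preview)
Your proof is correct and follows essentially the same route as the paper: identify $F_3=1/y_t$ and $F_1=-y_s/y_t$ from the Jacobian of $T^{-1}$, apply Lemma \ref{ProGev} for reciprocals and products, then recover $F_2$ from the Monge--Amp\`ere equation together with the composition $k(s,y(s,t))$. Your write-up is in fact slightly more explicit than the paper's, which simply says ``in view of the equation \reff{MA}'' without displaying the formula $F_2=(k\circ T^{-1}+F_1^2)/F_3$; and your closing remark that Lemma \ref{com} is reserved for the passage back to $u$ is also accurate.
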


\begin{proof}
  Indeed,  since $y(s,t)\in G^{\ell+1}\inner{T(\Omega)},$ then we conclude $y_s(s,t),y_t(s.t)\in
  G^{\ell+1}\inner{T(\Omega)};$ that is
  \[
   -\frac{u_{xy}(x(s,t),y(s,t))}{u_{yy}(x(s,t),y(s,t))}, ~~
   \frac{1}{u_{yy}(x(s,t),y(s,t))}\in G^{\ell+1}\inner{T(\Omega)}.
  \]
  Lemma \ref{ProGev} yields that $F_3(s,t), F_1(s,t) \in G^{\ell+1}\inner{T(\Omega)}$.
  Moreover, the fact that $k(x,y)\in G^{\ell+1}(\Omega)$ and $x(s,t)=s, y(s,t)\in
  G^{\ell+1}\inner{T(\Omega)},$  implies $k(s,y(s,t))\in
  G^{\ell+1}\inner{T(\Omega)}$, we have, in view of the  equation
  \reff{MA},
  \[
    F_2(s,t)=u_{xx}(x(s,t),y(s,t))\in G^{\ell+1}\inner{T(\Omega)}.
  \]
  This gives the conclusion.
\end{proof}

As a consequence of the above proposition, there exists a constant
$M_*,$ depending only on the Gevrey constants of $k(x,y)$ and
$y(s,t),$ such that for all $i,j\in\zz_+$ with $i,j\geq2,$
  \begin{equation}\label{three}
    \max\limits_{(s,t)\in T(K)}\abs{\p_s^i\p_t^j F_m(s,t)}\leq
    M_*^{i+j}\com{\inner{i-2}!}^{\ell+1}\com{\inner{j-2}!}^{\ell+1},\quad m=1,2,3.
  \end{equation}

\begin{prop}\label{prop3.4}
Assume that $y(s,t)\in G^{\ell+1}(T(\Omega))$ and $k(x, y)\in
G^{\ell+1}(\Omega)$. There exists a constant $\cm,$ depending only
on the Gevrey constants of the functions $y(s,t)$ and $k(x,y),$ such
that for all $i\geq2$,
  \begin{equation}\label{reg-second}
    \max_{(x,y)\in K}\abs{\p_x^i u_y(x,y)}+\max_{(x,y)\in K}\abs{\p_x^i u_x(x,y)}\leq
    2\com{u}_{3,K}\cm^{i-2} \com{(i-2)!}^{\ell+1}.
  \end{equation}
\end{prop}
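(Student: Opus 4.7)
The plan is to derive a first-order identity expressing $\p_x u_y$ and $\p_x u_x$ as Gevrey functions of $x$ and $u_y(x,y)$, and then to iterate this identity using Remark \ref{remark} (the one-variable version of Friedman's composition estimate). Since $T(x,y)=(x,u_y(x,y))$, the preceding proposition directly gives
\begin{equation*}
\p_x u_y(x,y) = u_{xy}(x,y) = F_1(x, u_y(x,y)), \qquad \p_x u_x(x,y) = u_{xx}(x,y) = F_2(x, u_y(x,y)),
\end{equation*}
with $F_1, F_2 \in G^{\ell+1}(T(\Omega))$ satisfying the bound \reff{three}. Differentiating $i-1$ times in $x$ with $y$ held fixed yields $\p_x^i u_y = \p_x^{i-1}[F_1(x, u_y(x,y))]$ and $\p_x^i u_x = \p_x^{i-1}[F_2(x, u_y(x,y))]$, which is exactly the composition structure treated by Lemma \ref{com}.

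I would then proceed by induction on $i \geq 2$, establishing that $|\p_x^i u_y(x,y)| + |\p_x^i u_x(x,y)| \leq 2 \com{u}_{3,K} \cm^{i-2}((i-2)!)^{\ell+1}$ uniformly on $K$. Take $M_j = (j!)^{\ell+1}$: the monotonicity condition \reff{0810181} holds with $C^* = 1$, since $\binom{j}{i}(i!)^{\ell+1}((j-i)!)^{\ell+1} = j!\,(i!(j-i)!)^{\ell} \leq (j!)^{\ell+1}$. The estimate \reff{three} supplies the hypothesis of Lemma \ref{com} for both $F_1$ and $F_2$ on $T(K)\times(-b,b)$ with constant $C$ depending only on $M_*$; let $\tilde{C}$ and $C_*$ denote the resulting constants of the lemma. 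Set $H_0 = \com{u}_{3,K}$ and $H_1 = \cm$. The base case $i=2$ is immediate from $|u_{xxy}| + |u_{xxx}| \leq 2 H_0$. For the inductive step, assuming the target bound for all $2 \leq m \leq i-1$ (together with the trivial $|u_y|, |u_{xy}| \leq H_0$ for $m \leq 1$), Remark \ref{remark} applied with $z_i \mapsto x$, $\xi(\cdot) = u_y(\cdot, y)$, $N = i-1$, and $F$ equal to $F_1$ or $F_2$ in turn yields
\begin{equation*}
|\p_x^i u_y(x,y)| + |\p_x^i u_x(x,y)| \leq 2 C_* H_0 \cm^{i-3}((i-3)!)^{\ell+1}.
\end{equation*}

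Closing the induction requires $C_* \cm^{i-3}((i-3)!)^{\ell+1} \leq \cm^{i-2}((i-2)!)^{\ell+1}$, i.e.\ $C_* \leq \cm (i-2)^{\ell+1}$, which is satisfied for all $i \geq 3$ once $\cm \geq C_*$; the applicability of Remark \ref{remark} at each step further requires $H_1 = \cm \geq \tilde{C} H_0$. Both constraints are met by choosing $\cm \geq \max\set{C_*, \tilde{C} H_0}$, and the resulting $\cm$ depends only on the Gevrey constants of $y(s,t)$ and $k(x,y)$ (through $M_*$, hence through $\tilde{C}$ and $C_*$) together with $\com{u}_{3,K}$. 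The main bookkeeping point is matching the exponent shift from $(i-3)!$ to $(i-2)!$ in the composition estimate, which is absorbed effortlessly by the gained factor $(i-2)^{\ell+1} \geq 1$; no essential new obstacle arises beyond correctly identifying the auxiliary sequence $M_j$ and the parameters $H_0, H_1$ so that the shape of Remark \ref{remark}'s conclusion reproduces the induction hypothesis at level $i$.
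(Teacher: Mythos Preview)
Your argument is correct and essentially identical to the paper's: the same identities $u_{xy}=F_1(x,u_y)$, $u_{xx}=F_2(x,u_y)$, the same choice $M_j=(j!)^{\ell+1}$, $H_0=[u]_{3,K}$, $H_1=\cm$, and the same appeal to Remark~\ref{remark} to close an induction on the order of $\p_x$-differentiation. The only cosmetic difference is that the paper first runs the induction for $\p_x^i u_y$ alone (since only the $u_y$-bounds enter as the inner function $\xi$) and then deduces the $\p_x^i u_x$ estimate from the completed $u_y$-estimate via $F_2$, whereas you carry both along simultaneously; this changes nothing, because your inductive step for $u_x$ likewise uses only the hypothesis on $u_y$.
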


\begin{proof}
  We first use induction on  integer $i$ to show that
  \begin{equation}\label{x-reg}
    \max\limits_{(x,y)\in K}\abs{\p_x^i u_y(x,y)}\leq \com{u}_{3,K}\cm^{i-2}
    \com{(i-2)!}^{\ell+1},\quad i\geq 2.
  \end{equation}
  Obviously, \reff{x-reg} is  valid for $i=2.$ Now assuming
  \begin{equation}\label{first}
    \max\limits_{(x,y)\in K}\abs{\p_x^i u_y(x,y)}\leq \com{u}_{3,K}\cm^{i-2}
    ((i-2)!)^{\ell+1}, \quad \textrm{for all}~~2\leq i\leq N
  \end{equation}
  with $N\geq 2$ an integer, we need to show that
  \begin{equation}\label{second}
    \max\limits_{(x,y)\in K}\abs{\p_x^{N+1}u_y(x,y)}
    =\max\limits_{(x,y)\in K}\abs{\p_x^{N}u_{xy}(x,y)}\leq \com{u}_{3,K}\cm^{N-1}
    \com{(N-1)!}^{\ell+1}.
  \end{equation}
  Observe that $F_1=u_{xy}\circ T^{-1}$ which implies
  \begin{equation*}
    u_{xy}(x,y)=(F_1\circ T)(x,y)=F_1(x,u_{y}(x,y)).
  \end{equation*}
  Thus the desired estimate \reff{second} will follow if we can
  prove
 \begin{equation}\label{second++}
    \max\limits_{(x,y)\in K}\abs{\p_x^{N}\com{F_1\inner{x,u_y(x,y)}}}\leq \com{u}_{3,K}\cm^{N-1}
    \com{(N-1)!}^{\ell+1}.
  \end{equation}
  In the following we shall apply Remark \ref{remark} to deduce the above
  estimate.

  Define
  \[
    M_j=(j!)^{\ell+1}, \quad H_0=\com u_{3, K},
    \quad H_1=\cm.
  \]
  Clearly $\set{M_j}$ satisfies the  monotonicity condition
 \reff{0810181}. Furthermore, \reff{first} and \reff{three} yield
  \begin{align*}
    \max\limits_{(x,y)\in K}\abs{\p_x^i u_y(x,y)}&\leq H_0,\quad\textrm{for~~}i\leq1,\\
    \max\limits_{(x,y)\in K}\abs{\p_x^i u_y(x,y)}&\leq H_0H_1^{i-2}
    M_{i-2}, \quad \textrm{for all~~}i \textrm{~with~}2\leq i\leq N,\\
    \intertext{and}
    \max\limits_{(s,t)\in T(K)}\abs{\p_s^i\p_t^jF_1(s,t)}&\leq M_*^{i+j}M_{i-2}M_{j-2},
    \quad\textrm{for
    all}~~i,j\in\nn~~\textrm{with}~~i,j\geq2.
  \end{align*}
  Then it follows from the above three inequalities that the conditions
  in Remark \ref{remark} are satisfied, with
  $z_i=x, \xi(z)=u_y(x,y)$ and $F(z,\xi(z))=F_1(x,u_y(x,y)).$
  This yields
  \begin{align*}
    \max\limits_{(x,y)\in K}\abs{\p_x^{N}\com{F_1\inner{x,u_y(x,y)}}}&\leq
    C_*H_0H_1^{N-2}M_{N-2}\\
    &=C_* \com{u}_{3,K}\cm^{N-2}\com{(N-2)!}^{\ell+1}
  \end{align*}
  with $C_*$ a constant depending only on $M_*$ and hence on the
  Gevrey constants of $y(s,t)$ and $k(x,y).$
  Then estimate \reff{second++} follows  if we choose $\cm$ large enough such that $\cm\geq C_*.$
  This completes the proof of \reff{x-reg}.

  Now it remains to prove
  \begin{equation*}
    \max\limits_{(x,y)\in K}\abs{\p_x^i u_x(x,y)}\leq \com{u}_{3,K}\cm^{i-2} \com{(i-2)!}^{\ell+1},
    \quad i\geq 2.
  \end{equation*}
  This can be deduce similarly as above.  In view of \reff{x-reg} and \reff{three},
  we can use Remark \ref{remark}, with
  $z=(x,y), z_i=x, \xi(z)=u_y(x,y)$ and $F(z,\xi(z))= F_2(x,u_y(x,y)),$ to obtain the above estimate.
\end{proof}

\medskip
\noindent \textbf{End of the proof of Theorem \ref{Gev}}:  Now we
can show $u\in G^{\ell+1}(\Omega)$, i.e.,
   \begin{equation}\label{fin}
     \max\limits_{(x,y)\in K}\abs{{\p^\alpha u(x,y)}}\leq 2[u]_{3,K}\cm^{m-3}
     \com{(m-3)!}^{\ell+1},\quad \forall~~\abs\alpha =m\geq3,
  \end{equation}
  where $\cm$ is the constant given in \reff{reg-second}.

  We use induction on $m.$
  The validity of \reff{fin} for $m=3$ is obvious. Assuming, for some positive
  integer $m_0\geq4,$
  \begin{equation}\label{m1}
     \max\limits_{(x,y)\in K}\abs{\p^\gamma u(x,y)}\leq 2[u]_{3,K}\cm^{m-3}
     \com{(m-3)!}^{\ell+1},\quad \forall~~ 3\leq \abs\gamma=m\leq m_0-1.
  \end{equation}
  we need to show the validity of \reff{fin} for $m=m_0.$
  In the following discussions, let $\alpha$ be any fixed multi-index with
  $\abs\alpha=m_0.$ In view of \reff{reg-second}, we only need to
  consider the case when $\p^\alpha=\p^{\tilde\alpha}\p_y^2$ with $\tilde\alpha$
  a multi-index satisfying $\abs{\tilde\alpha}=m_0-2.$
  Observe $F_3=u_{yy}\circ T^{-1}$ which implies
  \[
    u_{yy}(x,y)=\inner{F_3\circ T}(x,y)=F_3(x,u_y(x,y)).
  \]
  Hence
  \[
    \p^\alpha u =\p^{\tilde\alpha}u_{yy}=\p^{\tilde\alpha}\com{F_3(x,u_y(x,y))},
    \quad \abs{\tilde\alpha}=m_0-2.
  \]
  So the validity of  \reff{fin}  for $m=m_0$ will follow if we show
  that, for any  $\abs{\tilde\alpha}=m_0-2,$
  \begin{equation}\label{tildealpha}
    \max\limits_{(x,y)\in K}\abs{\p^{\tilde\alpha}\com{F_3(x,u_y(x,y))}}\leq
    2[u]_{3,K}\cm^{m_0-3}\com{(m_0-3)!}^{\ell+1}.
  \end{equation}
  To obtain the above estimate, we take $M_j,H_0,H_1$ as in the proof of Proposition \ref{prop3.4};
  that is
  \[
    M_j=(j!)^{\ell+1}, \quad H_0=\com u_{3, K},
    \quad H_1=\cm.
  \]
  Then from \reff{three} and the induction assumption \reff{m1},  one has
  \begin{align*}
    \max\limits_{(x,y)\in K}\abs{\p^\gamma u_y(x,y)}&\leq 2H_0,\quad\textrm{for~~}\abs\gamma=m\leq1,\\
    \max\limits_{(x,y)\in K}\abs{\p^\gamma u_y(x,y)}&\leq 2H_0H_1^{m-2}
    M_{m-2}, \quad \textrm{for all~~}2\leq\abs\gamma= m\leq m_0-2,\\
    \intertext{and}
    \max\limits_{(s,t)\in T(K)}\abs{\p_s^i\p_t^j F_3(s,t)}&\leq M_*^{i+j}M_{i-2}M_{j-2}, \quad\textrm{for
    all}~~i,j\in\nn~~\textrm{with}~~i,j\geq2.
  \end{align*}
  Consequently, Lemma \ref{com},  with
  $z=(x,y), \xi(z)=u_y(x,y), N=m_0-2$ and $F(z,\xi(z))=F_3(x,u_y(x,y)),$
  yields for any $\abs{\tilde\alpha}=m_0-2$
  \begin{align*}
     \max\limits_{(x,y)\in K}\abs{\p^{\tilde\alpha}\com{F_3\inner{x,u_y(x,y)}}}&\leq
    2\tilde CH_0H_1^{m_0-4}M_{m_0-4}\\
    &=2\tilde C \com{u}_{3,K}\cm^{m_0-4}\com{(m_0-4)!}^{\ell+1},
  \end{align*}
  where $\tilde C$ is a constant depending only the Gevrey constants
  of $k(x,y)$ and $y(s,t).$  Thus \reff{tildealpha} follows if we choose $\cm$ large enough such that
  $\cm\geq 2\tilde C.$ This gives validity of \reff{fin} for $m=m_0$ and hence for all
  $m\geq 3,$ completing the proof of Theorem \ref{Gev}.


\section{Technical lemmas}\label{lemmas}

In this section, we prove the technical  Lemmas( Lemma \ref{lemm2.1}
and Lemma \ref{lemm2.2}) used in the section \ref{0811031}. Firstly
as an analogue of Lemma \ref{com}, we have
\begin{lem}\label{081101}

Let $N>4$ and $0<\rho<1$ be given. Let $\{M_j\}$ be a positive sequence
satisfying the monotonicity condition
\reff{0810181} and that
\begin{equation*}
   M_{j}\geq \rho^{-j},\quad j\geq 0.
\end{equation*}
Suppose $F(s,t,p),g(s,t)$ are two smooth functions satisfying the following two conditions:

1) There exists a constant $C$ such that for any $j, l\geq 2$,
\begin{equation*}
\big\|\p_{s,t}^{\gamma}\p_p^lF\big\|_{C^{4}(\bar{ B}\times [-b,
b])}\leq  C^{j+l} M_{j-2}M_{l-2},\quad \forall ~\abs\gamma=j,
\end{equation*}
where $b=\com{g}_{0,\bar  B}$ and $\norm{\cdot}_{C^{4}(\bar{
B}\times [-b, b])}$ is the standard H\"{o}rder norm.

2) There exist two constants $H_0,H_1\geq1,$ satisfying $H_1\geq
\tilde C H_0$ with $\tilde C$ a constant depending only on the above
constant $C$,  such that $\com{g}_{6,\bar B}\leq H_0$ and for any
$0<\rho_*<1$ with $\rho_*\thickapprox\rho$ and any $j, 2\leq j\leq
N,$
\begin{equation*}
\|\vpi_{\rho_*,j}\p^j g\|_{\nu}\leq H_0H_1^{j-2}M_{j-2},
\end{equation*}
where  $1<\nu<4$ is a real number.

Then there exists a constant $C_*$ depending only on $C,$ such that
\begin{equation*}
\big\|\vpi_{\rho,N}\p^{N}\inner{F\big(\cdot,
g(\cdot)\big)}\big\|_{\nu}\leq  C_* H_0H_1^{N-2}M_{N-2}.
\end{equation*}
\end{lem}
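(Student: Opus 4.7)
The plan is to combine the multivariate Fa\`a di Bruno formula with Sobolev--algebra and Moser-type composition estimates. For $|\alpha|=N$ I would first expand
\begin{equation*}
\partial_{s,t}^{\alpha}\bigl[F(s,t,g(s,t))\bigr] = \sum_{\pi}c_{\pi}\bigl(\partial_{s,t}^{\gamma}\partial_{p}^{l}F\bigr)(s,t,g(s,t))\prod_{i=1}^{l}\partial^{\beta_{i}}g,
\end{equation*}
where the sum runs over decompositions $\pi=(\gamma,l,(\beta_{1},\dots,\beta_{l}))$ with $|\gamma|+\sum_{i}|\beta_{i}|=N$, $|\beta_{i}|\geq 1$, and $c_{\pi}$ is the usual combinatorial coefficient. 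Multiplying by $\vpi_{\rho,N}$ and taking the $\|\cdot\|_{\nu}$ norm of each summand, the fact that $H^{\nu}(\rr^{2})$ is a Banach algebra (since $\nu>1$) permits a factor-by-factor estimate.

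For each summand I would pick auxiliary parameters $\rho_{i}\thickapprox\rho$ small enough that $\vpi_{\rho_{i},|\beta_{i}|}\equiv 1$ on $\mathrm{supp}\,\vpi_{\rho,N}$, so that every $\partial^{\beta_{i}}g$ may be freely replaced by $\vpi_{\rho_{i},|\beta_{i}|}\partial^{\beta_{i}}g$. Selecting an index $i_{0}$ achieving $\max_{i}|\beta_{i}|$, I would estimate that factor directly in $H^{\nu}$ via the second hypothesis, and bound each remaining $g$-factor in $L^{\infty}$ by Sobolev embedding $H^{\nu}\hookrightarrow L^{\infty}$ in two dimensions; the factors with $|\beta_{i}|\leq 1$ are controlled uniformly by $\com{g}_{6,\bar B}\leq H_{0}$. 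For the $F$-factor I would use a Moser composition estimate: since $\nu<4$ and $g\in C^{\nu}$ with norm $\lesssim H_{0}$, one obtains
\begin{equation*}
\bigl\|\bigl(\partial_{s,t}^{\gamma}\partial_{p}^{l}F\bigr)(\cdot,g(\cdot))\bigr\|_{C^{\nu}}\leq \widetilde{C}^{\,j+l}M_{j-2}M_{l-2},
\end{equation*}
with the convention $M_{-2}=M_{-1}=1$ for the borderline cases.

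Combining these pointwise bounds, each Fa\`a di Bruno term is majorised by
\begin{equation*}
c_{\pi}\cdot \widetilde{C}^{\,j+l}M_{j-2}M_{l-2}\cdot H_{0}^{\,l}\prod_{i=1}^{l}H_{1}^{|\beta_{i}|-2}M_{|\beta_{i}|-2}.
\end{equation*}
Summing over the partitions with $\sum|\beta_{i}|=N-j$ and iterating the monotonicity \reff{0810181} gives
\begin{equation*}
\sum_{\substack{\beta_{1}+\cdots+\beta_{l}=m\\ \beta_{i}\geq 1}}\binom{m}{\beta_{1},\dots,\beta_{l}}M_{\beta_{1}-2}\cdots M_{\beta_{l}-2}\leq (C^{*})^{l-1}\widetilde{C}^{\,l}M_{m-2},
\end{equation*}
which collapses the $l$-fold product; a further application absorbs $M_{j-2}M_{N-j-2}$ into $M_{N-2}$. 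Summing in $j$ and $l$, the choice $H_{1}\geq \tilde C H_{0}$ with $\tilde C$ large makes the residual series in $l$ converge geometrically and yields the desired bound $C_{*}H_{0}H_{1}^{N-2}M_{N-2}$.

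The main obstacle is the combinatorial bookkeeping: every term contributes a factor $H_{0}^{\,l}H_{1}^{N-j-2l}$, and the $H_{1}$-exponent comes out to the required $N-2$ only after trading excess $H_{0}$'s against $H_{1}^{-1}$ by means of $H_{0}\leq H_{1}/\tilde C$. Performing this exchange uniformly over all partitions -- and in particular treating the borderline cases $j\leq 1$, $l\leq 1$ and $|\beta_{i}|\leq 1$ without ambiguity -- is where most of the technical care is required. The hypothesis $M_{j}\geq\rho^{-j}$ is used to absorb into $M_{N-2}$ any residual negative powers of $\rho$ that arise from the $\nu$-norms of the auxiliary cutoffs through Lemma~\ref{0810182}.
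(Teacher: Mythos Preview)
Your approach is essentially the same as the paper's: expand via Fa\`a di Bruno, use that $H^{\nu}(\rr^{2})$ is an algebra to factor the norm, insert cutoffs $\vpi_{\rho_*,|\gamma_j|}$ equal to $1$ on $\mathrm{supp}\,\vpi_{\rho,N}$ on each $g$-factor, bound the $F$-factor via its $C^{4}$ norm together with $[g]_{6,\bar B}\leq H_0$, and then run Friedman's combinatorial summation. The paper in fact gives only a sketch and refers to \cite{Friedman58} and \cite{CLXK} for the bookkeeping you spell out.

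One small imprecision: after invoking the algebra property you propose to keep only the factor with largest $|\beta_i|$ in $H^{\nu}$ and push the others to $L^{\infty}$. That is not how the algebra estimate works---a product bound in $H^{\nu}$ requires every factor in $H^{\nu}$ (or, if you prefer the Moser product inequality, a sum over which factor carries the $H^{\nu}$ norm, not a single choice of $i_0$). The paper simply bounds
\[
\bigl\|\vpi_{\rho,N}(\p_{s,t}^{\beta}\p_p^{l}F)(\cdot,g)\textstyle\prod_j \p^{\gamma_j}g\bigr\|_{\nu}
\leq \bigl\|\psi(\p_{s,t}^{\beta}\p_p^{l}F)(\cdot,g)\bigr\|_{\nu}\cdot\textstyle\prod_j\|\vpi_{\rho,|\gamma_j|}\p^{\gamma_j}g\|_{\nu},
\]
keeping all factors in $H^{\nu}$; this feeds directly into the monotonicity \reff{0810181} without the extra step of selecting $i_0$. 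With that correction your argument and the paper's coincide.
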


\begin{proof}
  The proof  is similar to Lemma 5.3 of \cite{CLXK}, so
  we give only main idea of the proof here. In the proof,
  we use $C_n$ to denote  constants which depend only on
$n$ and may be different in different contexts. By Fa\`{a} di Bruno'
formula, $\varphi_{\rho,N} D^\alpha[F(\cdot,g(\cdot))]$ is the
linear combination of terms of the form
\begin{eqnarray}\label{++A}
\varphi_{\rho,N}\inner{\p_{s,t}^\beta\partial_p^lF}(\cdot,g(\cdot))\cdot\prod_{j=1}^l
\p^{\gamma_j}g,
\end{eqnarray}
where $|\beta|+l\leq |\alpha|$ and $
\gamma_1+\gamma_2+\cdots+\gamma_l=\alpha-\beta,$  and  if
$\gamma_i=0$,  $D^{\gamma_i}g$ doesn't appear in (\ref{++A}). Since
$H^{\nu}(\rr^2)$ for $\nu>1$ is an algebra, then we have
\begin{align*}
  &\|\varphi_{\rho,N}\inner{\p_{s,t}^\beta\partial_p^lF}(\cdot,g(\cdot))\cdot\prod_{j=1}^l
\p^{\gamma_j}g\|_{\nu} \\
&\leq\big\|\psi
  \inner{\p_{s,t}^\beta\partial_p^lF}(\cdot,g(\cdot))\big\|_{\nu}
  \cdot\prod_{j=1}^
  {l}
\big\| \varphi_{\rho, \abs{\gamma_j}} \p^{\gamma_j}g\big\|_{\nu},
\end{align*}
where  $\psi\in C_0^\infty(\rr^2)$  and $\psi=1$ on supp
$\varphi_{\rho,N}.$ The above inequality allows us to adopt the
approach used by Friedman  to prove Lemma \ref{com},  to get the
desired estimate. Instead of the $L^\infty$ norm in Lemma \ref{com},
we use $H^\nu$-norm here. But there is no additional difficulty
since $H^\nu(\rr^2)$ is an algebra. We refer to \cite{Friedman58}
for more detail.
\end{proof}

Applying the above result to the functions $\tilde
k(s,t)\stackrel{{\rm def}}{=} k(s,w(s,t))$ and $\tilde
k_w(s,t)\stackrel{{\rm def}}{=}k_y(s,w(s,t)),$ we have

\begin{cor}\label{0811012}
  Let $N_0>4$ and   $j_0\in[0,\ell+1]$ be any given integers. Suppose $k(x,y)\in
  G^{\ell+1}(\rr^2)$ and $w(s,t)\in C^\infty(\bar B)$  satisfying that
  for all  $5\leq m\leq N_0$ and all $\rho$ with $0<\rho<1,$
  \begin{equation}\label{871}
    \norm{\vpi_{\rho, m}\p^m w}_{2+\frac{j_0-1}{\ell+1}}
    \leq
    \frac{c_*L^{m-2}}{{\rho}^{\inner{\ell+1}(m-3)}}
    \inner{\frac{m}{\rho}}^{j_0}\com{\inner{m-3}!}^{\ell+1},
  \end{equation}
   where $L,c_*$ are two constants with $c_*$ independent of $L$. Then there exists a constant
   $\tilde c,$ depending only on the Gevrey constants of $k,w,$ and the above constant $c_*,$
   such that for all $5\leq m\leq N_0$ and all $\rho$ with $0<\rho<1,$
  \begin{align}\label{872}
    &\norm{\vpi_{\rho, m}\p^m \tilde k}_{2+\frac{j_0-1}{\ell+1}}+\norm{\vpi_{\rho, m}
    \p^m \tilde k_w}_{2+\frac{j_0-1}{\ell+1}}\\
    &\qquad\qquad \leq \frac{\tilde c L^{m-2}}{{\rho}^{\inner{\ell+1}(m-3)}}
    \inner{\frac{m}{\rho}}^{j_0}\com{\inner{m-3}!}^{\ell+1}.\nonumber
  \end{align}
\end{cor}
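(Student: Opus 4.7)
The plan is to apply Lemma~\ref{081101} twice: once with $F(s,t,p) = k(s,p)$ and $g = w$ to bound $\p^m \tilde k$, and once with $F(s,t,p) = k_y(s,p)$ and $g = w$ to bound $\p^m \tilde k_w$. In both applications the inner function $w$ satisfies exactly the weighted cutoff Sobolev estimate required by condition 2) of the lemma, thanks to the standing hypothesis \reff{871}.

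The first step is to choose the majorizing sequence $\{M_i\}$ together with $H_0$ and $H_1$ so that \reff{871} matches the template $\|\vpi_{\rho_*,m}\p^m w\|_{\nu} \leq H_0 H_1^{m-2} M_{m-2}$. A natural choice is
$$M_i = \frac{((i-1)!)^{\ell+1}}{\rho^{(\ell+1)(i-1)}}\Bigl(\frac{i+2}{\rho}\Bigr)^{j_0} \qquad (i \geq 1),$$
extended by $M_0$ of comparable size, together with $H_0 = c_*$ and $H_1 = L$; with $\nu = 2 + (j_0-1)/(\ell+1) \in (1,4]$, condition 2) of Lemma~\ref{081101} then holds on the nose. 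I would next verify the monotonicity \reff{0810181}: the standard Gevrey sequence $(j!)^{\ell+1}$ satisfies it via $\binom{j}{i} \leq 2^j$, and the auxiliary weights $\rho^{-(\ell+1)i}(i+2)^{j_0}$ preserve it up to a constant $C^* = C^*(\ell)$. The lower bound $M_j \geq \rho^{-j}$ is immediate from $\ell+1 \geq 1$ and $0 < \rho < 1$, while $\com{w}_{6,\bar B} \leq H_0$ can be arranged by enlarging $c_*$ if needed.

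Next I would verify condition 1) on $F$ using $k \in G^{\ell+1}(\rr^2)$: on any fixed compact containing $\bar B \times [-b,b]$, Gevrey regularity gives $\|\p_{s,t}^\gamma \p_p^l k\|_{C^4} \leq C_k^{|\gamma|+l+5}((|\gamma|+l+4)!)^{\ell+1}$, and after bounding $(j+l+4)! \leq C_\ell \cdot 2^{j+l}\,j!\,l!$ and writing $(j!)^{\ell+1} = (j(j-1))^{\ell+1}((j-2)!)^{\ell+1}$, the polynomial factors can be absorbed into $\rho^{-(\ell+1)(j+l-4)}$ (using $\rho < 1$) at the cost of enlarging the exponential constant, yielding $\|\p_{s,t}^\gamma\p_p^l k\|_{C^4} \leq C^{j+l}M_{j-2}M_{l-2}$; an identical argument handles $k_y$. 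The smallness requirement $H_1 \geq \tilde C H_0$, i.e.\ $L \geq \tilde C c_*$, is guaranteed by taking $L$ large in the ambient induction of Proposition~\ref{0810272}.

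Applying Lemma~\ref{081101} with $N = m$ then yields, for each $5 \leq m \leq N_0$,
$$\|\vpi_{\rho,m}\p^m \tilde k\|_{\nu} \leq C_* H_0 H_1^{m-2} M_{m-2} = \frac{C_* c_* L^{m-2}}{\rho^{(\ell+1)(m-3)}}\Bigl(\frac{m}{\rho}\Bigr)^{j_0}((m-3)!)^{\ell+1},$$
and the analogous bound for $\tilde k_w$; summing them and setting $\tilde c = 2 C_* c_*$ produces \reff{872}. The main obstacle is the bookkeeping of exponents: the sequence $M_j$ must simultaneously encode the precise $((m-3)!)^{\ell+1}$, the $\rho^{-(\ell+1)(m-3)}$ weight, and the extra $(m/\rho)^{j_0}$ factor which sits outside the pure Gevrey term, while still satisfying the convolution-type monotonicity \reff{0810181} and accommodating the Gevrey bound on $k$ within the template $C^{j+l}M_{j-2}M_{l-2}$. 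Once this algebraic alignment is engineered, the rest is a direct appeal to Lemma~\ref{081101}.
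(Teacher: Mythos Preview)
Your approach is essentially identical to the paper's: the same choice of $M_j$, $H_0$, $H_1$ (the paper takes $H_0=c_*\big([w]_{8,\bar B}+1\big)$, which is exactly your ``enlarge $c_*$'' fix), and the same reduction to Lemma~\ref{081101} applied with $F=k$ and $F=k_y$. The only point needing more care than your sketch is the monotonicity condition \reff{0810181} for this particular $M_j$---the phrase ``via $\binom{j}{i}\leq 2^j$'' would produce a $j$-dependent constant; the paper instead verifies $\binom{j}{i}M_iM_{j-i}\leq C_\ell M_j$ by a direct computation, crucially using $j_0\leq \ell+1$ to absorb the extra $(\cdot/\rho)^{j_0}$ factors.
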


\begin{proof}
 We set
  $H_0=c_*\inner{\com{w}_{8,\bar B}+1}, H_1=L$ and
  \begin{align*}
    M_0=\frac{1}{\rho^3},\quad M_j=\frac{\com{(j-1)!}^{\ell+1}}{\rho^{\inner{\ell+1}(j-1)}}
    \inner{\frac{j+2}{\rho}}^{j_0},\quad j\geq1.
  \end{align*}
  Then by \reff{871}, we have
  \begin{equation*}
    \norm{\vpi_{\rho, m}\p^m w}_{2+\frac{j_0-1}{\ell+1}}
    \leq H_0H_1^{m-2}M_{m-2},\quad  2\leq m\leq N_0.
  \end{equation*}
On the other hand, the fact that $k\in G^{\ell+1}(\rr^2)$, $k_y\in
G^{\ell+1}(\rr^2)$ and $M_j\geq \com{(j-1)!}^{\ell+1}$ implies
  \begin{equation*}
     \big\|\p_{x}^{i}\p_y^j k(x, y) \big\|_{C^{4}(\Omega)}+
     \big\|\p_{x}^{i}\p_y^j k_y(x, y) \big\|_{C^{4}(\Omega)}\leq
     \mathcal {C}^{i+j}  M_{i-2}M_{j-2},\quad
     \forall ~i, j\geq2,
  \end{equation*}
 where $\mathcal {C}$ is the Gevrey constant of $k$.
  Then by Lemma \ref{081101}, the desired inequality \reff{872} will follow if we
  show that $\{M_j\}$ satisfies the
  monotonicity condition \reff{0810181}. For every $0< i< j,$ we
  compute
  \begin{align*}
    {j\choose i}M_iM_{j-i}&=\frac{j!}{i!(j-i)!}
    \frac{\big((i-1)!\big)^{\ell+1}}{\rho^{(\ell+1)(i-1)}}\left(\frac{i+2}{\rho}\right)^{j_0}
    \frac{\Big((j-i-1)!\Big)^{\ell+1}}{\rho^{(\ell+1)(j-i-1)}}\left(\frac{j-i+2}{\rho}\right)^{j_0}\\
    &= \frac{1}{\rho^{(\ell+1)(j-2)}}
    \frac{j!\big((i-1)!\big)^{\ell}\big((j-i-1)!\big)^{\ell}}{i(j-i)}
    \left(\frac{i+2}{\rho}\right)^{j_0}
    \left(\frac{j-i+2}{\rho}\right)^{j_0}\\
    &\leq \frac{9^{j_0}}{\rho^{(\ell+1)(j-2)}}
    \frac{j!\big((j-2)!\big)^{\ell}}{i(j-i)}
    \left(\frac{i}{\rho}\right)^{j_0}
    \left(\frac{j-i}{\rho}\right)^{j_0}\\
    &\leq\left\{9^{\ell+1}\rho^{(\ell+1)-j_0}
    \frac{j^2i^{j_0-1}(j-i)^{j_0-1}}{(j-1)^{\ell+1}(j+2)^{j_0}}\right\}
    \frac{\big((j-1)!\big)^{\ell+1}}{\rho^{(\ell+1)(j-1)}}\left(\frac{j+2}{\rho}\right)^{j_0}\\
    &\leq \left\{9^{\ell+1}\rho^{(\ell+1)-j_0}
    \frac{j^2j^{2(j_0-1)}}{(j-1)^{\ell+1}(j+2)^{j_0}}\right\}
    \frac{\big((j-1)!\big)^{\ell+1}}{\rho^{(\ell+1)(j-1)}}\left(\frac{j+2}{\rho}\right)^{j_0}\\
    &\leq C_{{\ell}}M_j,
  \end{align*}
  where $C_{\ell}$ is a constant depending only on $\ell.$ In the last inequality we used the
  fact that $\ell+1-j_0\geq0.$ This completes the proof of
  Corollary \ref{0811012}.
\end{proof}

\bigbreak We prove now the technical  Lemmas of section
\ref{0811031}. We present  a complete proof of Lemma \ref{lemm2.1},
but omit the proof of Lemma \ref{lemm2.2} since it is similar.

\begin{proof}[{\bf Proof of Lemma \ref{lemm2.1}}]

We recall the hypothesis of Lemma \ref{lemm2.1}; that is, one has

(1) $k\in G^{\ell+1}(\rr^2)$ and $\cl w=0$;

(2) for some $N>5$, \reff{ind} is satisfied for any $5\leq m\leq
N-1$;

(3) for some $0\leq j_0\leq \ell$,
  \begin{align}\label{4.01}
  \begin{split}
    &\norm{\vpi_{\rho, N}\p^N w}_{2+\frac{j_0}{\ell+1}}
    +\norm{\p_s\Lambda^{2+\frac{j_0-1}{\ell+1}}\vpi_{\rho, N}\p^N w}_{0}
    +\norm{\tilde k^{\frac{1}{2}}\p_t\Lambda^{2+\frac{j_0-1}{\ell+1}}\vpi_{\rho, N}\p^N w}_{0}\\
    &\qquad\qquad\leq \frac{C_0L^{N-3}}{{\rho}^{(\ell+1)(N-3)}}
    \inner{\frac{N}{\rho}}^{j_0}\Big(\inner{N-3}!\Big)^{\ell+1}.
  \end{split}
  \end{align}
We want to prove
  \begin{equation}\label{4.02}
    \norm{ \cl \vpi_{\rho, N}\p^N w}_{2+\frac{j_0-1}{\ell+1}}\leq
    \frac{C_1 L^{N-3}}{{\rho}^{(\ell+1)(N-3)}}
    \inner{\frac{N}{\rho}}^{j_0+1}\Big(\inner{N-3}!\Big)^{\ell+1}
  \end{equation}
for all $0<\rho<1$.

It follows from $\cl w=0$   that
  \[\cl \vpi_{\rho,N}\p^\alpha w=\com{\cl,~ \vpi_{\rho,N}}\p^\alpha w
  +\vpi_{\rho,N} \com{\cl,~\p^\alpha}w,\quad \abs\alpha=N.\]
Hence the desired estimate \reff{4.02} will follow if we can prove
that
  \begin{align}\label{4.6}
    \norm{\com{\cl,~ \vpi_{\rho,N}}\p^N w}_{2+\frac{j_0-1}{\ell+1}}&\leq
    \frac{C_1L^{N-3}}{2{\rho}^{(\ell+1)(N-3)}}
    \inner{\frac{N}{\rho}}^{j_0+1}\com{\inner{N-3}!}^{\ell+1},\\
    \intertext{and}
    \label{4.7}
    \sum_{\abs\alpha=N}\norm{\vpi_{\rho,N}\com{\cl,~ \p^\alpha}w}_{2+\frac{j_0-1}{\ell+1}}&\leq
    \frac{C_1L^{N-3}}{2{\rho}^{(\ell+1)(N-3)}}
    \inner{\frac{N}{\rho}}^{j_0+1}\com{\inner{N-3}!}^{\ell+1}.
  \end{align}
We shall proceed to show the above two estimates by the following
steps. As a convention, in the sequel we use $\mathcal {C}_j$ to
denote different constants independent of $L,N.$

  \medskip
  {\bf Step 1.} We claim
  \begin{equation}\label{desired}
    \norm{\vpi_{\rho, m}\p^m w}_{0}
    \leq\frac{\mathcal{C}_1 L^{N-3}}{\rho^{(\ell+1)(N-3)}}
    \inner{\frac{N}{\rho}}^{-2(\ell+1)}\com{(N-3)!}^{\ell+1},
    \quad \forall ~ 3\leq m\leq N.
  \end{equation}

  \smallskip
  To confirm this, we set $\tilde\rho=\frac{(m-1)\rho}{m}.$ Then
  \begin{align*}
    \norm{\vpi_{\rho, m}\p^m w}_{0}
    &=\norm{\vpi_{\rho, m}\p^{2}\vpi_{\tilde\rho,m-2}\p^{m-2}
    u}_{0}\leq\norm{\vpi_{\tilde\rho,m-2}\p^{m-2}
    u}_{2},
  \end{align*}
 we can use \reff{ind} with $j=0$ to compute
  \begin{align*}
    \norm{\vpi_{\tilde\rho, m-2}\p^{m-2}
    u}_{2}&\leq \frac{L^{(m-2)-2}}{{\tilde\rho}^{(\ell+1)((m-2)-3)}}
    \com{\inner{(m-2)-3}!}^{\ell+1}\\
    &\leq \frac{\mathcal{C}_0 L^{N-4}}{{\rho}^{(\ell+1)(m-5)}}
    \com{\inner{m-5}!}^{\ell+1}\\
    &\leq \inner{\frac{N}{\rho}}^{-2(\ell+1)}\frac{\mathcal{C}_0 L^{N-3}}{{\rho}^{(\ell+1)(N-3)}}
    \com{\inner{N-3}!}^{\ell+1},
  \end{align*}
  which implies \reff{desired} at once.

 \medskip
  {\bf Step 2.} In this step, we shall prove the following two
  inequalities:
  \begin{equation}\label{4.11}
    \norm{\inner{\p_{t}\vpi_{\rho, N}}\tilde k\p_t\p^N
    w}_{2+\frac{j_0-1}{\ell+1}}\leq
    \frac{\mathcal{C}_2 L^{N-3}}{\rho^{(\ell+1)(N-3)}}\inner{\frac{N}{\rho}}^{j_0+1}\com{(N-3)!}^{\ell+1}
  \end{equation}
  and
  \begin{equation}\label{4.12}
    \norm{\inner{\p_{s}\vpi_{\rho,N}}\p_s\p^N
    w}_{2+\frac{j_0-1}{\ell+1}}\leq
    \frac{\mathcal{C}_3 L^{N-3}}{\rho^{(\ell+1)(N-3)}}\inner{\frac{N}{\rho}}^{j_0+1}\com{(N-3)!}^{\ell+1}.
  \end{equation}

  \smallskip
  To prove the first inequality \reff{4.11}, we use \reff{cutoffnorm} to get
  \begin{align*}
   &\norm{(\p_t\varphi_{\rho,N})\tilde k\p_{t}\p^N w}_{2+\frac{j_0-1}{\ell+1}}
   =\norm{(\p_t\varphi_{\rho, N})\tilde k\p_{t}\vpi_{\rho_1, N}\p^N
   w}_{2+\frac{j_0-1}{\ell+1}}\\
   &\leq \mathcal{C}_4 \set{\inner{\frac{N}{\rho}}\norm{\tilde k\p_{t}\vpi_{\rho_1, N}\p^N
   w}_{2+\frac{j_0-1}{\ell+1}}
   +\inner{\frac{N}{\rho}}^{3+\frac{j_0-1}{\ell+1}}\norm{\tilde k\p_{t}\vpi_{\rho_1, N}\p^N
   w}_{0}}.
  \end{align*}
  Furthermore, the interpolation inequality \reff{inter} gives
   \begin{align*}
    &\inner{\frac{N}{\rho}}^{3+\frac{j_0-1}{\ell+1}}\norm{\tilde k\p_t\vpi_{\rho_1, N}
    \p^N w}_{0}\\
    &\leq \inner{\frac{N}{\rho}}\norm{\tilde k\p_t\vpi_{\rho_1, N}
    \p^N w}_{2+\frac{j_0-1}{\ell+1}}+\inner{\frac{N}{\rho}}^{4+\frac{j_0-1}{\ell+1}}
    \norm{\tilde k\p_t\vpi_{\rho_1, N}
    \p^N w}_{-1}\\
    &\leq  \inner{\frac{N}{\rho}}\norm{\tilde k\p_t\vpi_{\rho_1, N}
    \p^N w}_{2+\frac{j_0-1}{\ell+1}}+\mathcal{C}_5\inner{\frac{N}{\rho}}^{4+\frac{j_0-1}{\ell+1}}\norm{\vpi_{\rho_1, N}
    \p^N w}_{0}\\
    &\leq  \inner{\frac{N}{\rho}}\norm{\tilde k\p_t\vpi_{\rho_1, N}
    \p^N w}_{2+\frac{j_0-1}{\ell+1}}
    +\frac{\mathcal{C}_6 L^{N-3}}{\rho^{(\ell+1)(N-3)}}\inner{\frac{N}{\rho}}^{j_0+1}\com{(N-3)!}^{\ell+1},
  \end{align*}
where we have used \reff{desired} and $\Lambda^{-1}\tilde k\p_t$ is
bounded in $L^2$. On the other hand,
  note that
  \begin{align*}
   \norm{\tilde k\p_{t}\vpi_{\tilde\rho,N}\p^N
   w}_{2+\frac{j_0-1}{\ell+1}}&\leq \norm{\tilde k\p_{t}\Lambda^{2+\frac{j_0-1}{\ell+1}}\vpi_{\tilde\rho,N}\p^N
   w}_{0}+\norm{\com{\tilde k,~\Lambda^{2+\frac{j_0-1}{\ell+1}}}\p_{t}\vpi_{\tilde\rho,N}\p^N
   w}_{0}\\
   &\leq \mathcal{C}_7\set{\norm{\tilde k^{\frac{1}{2}}\p_{t}\Lambda^{2+\frac{j_0-1}{\ell+1}}\vpi_{\tilde\rho,N}\p^N
   w}_{0}+\norm{\vpi_{\tilde\rho,N}\p^N
   w}_{2+\frac{j_0}{\ell+1}}},
  \end{align*}
  which together with \reff{4.01} yields:
  \begin{align*}
   \norm{\tilde k\p_{t}\vpi_{\tilde\rho,N}\p^N
   w}_{2+\frac{j_0-1}{\ell+1}}
   \leq
   \frac{\mathcal{C}_8 L^{N-3}}{\rho^{(\ell+1)(N-3)}}\inner{\frac{N}{\rho}}^{j_0}\com{(N-3)!}^{\ell+1},
  \end{align*}
  and hence we obtain the desired inequality \reff{4.11}, combining the
  above inequalities. Similar arguments can be applied to prove
  \reff{4.12}. This completes the proof.

  \medskip
  {\bf Step 3.} We now claim that
  \begin{align}\label{4.13}
    \begin{split}
      &\norm{\inner{\p_{ss}\vpi_{\rho, N}}
      \p^N w}_{2+\frac{j_0-1}{\ell+1}}+\norm{\inner{\p_{tt}\vpi_{\rho, N}}
      \tilde k\,\p^N w}_{2+\frac{j_0-1}{\ell+1}}\\
      &\qquad\qquad\leq \frac{\mathcal{C}_9 L^{N-3}}{\rho^{(\ell+1)(N-3)}}
    \inner{\frac{N}{\rho}}^{j_0+1}\com{(N-3)}^{\ell+1}.
    \end{split}
  \end{align}

  \smallskip
  To confirm this,  we use \reff{cutoffnorm} to get
  \begin{align*}
    &\norm{\inner{\p_{ss}\vpi_{\rho,N}}
      \p^N w}_{2+\frac{j_0-1}{\ell+1}}+\norm{\inner{\p_{tt}\vpi_{\rho,N}}
     \tilde k\,\,\p^N w}_{2+\frac{j_0-1}{\ell+1}}\\&
     \leq \mathcal{C}_{10}\set{\inner{\frac{N}{\rho}}^2\norm{ \vpi_{\rho_1, N}
    \p^{N} w}_{2+\frac{j_0-1}{\ell+1}}+\inner{\frac{N}{\rho}}^{4+\frac{j_0-1}{\ell+1}}
    \norm{ \vpi_{\rho_1, N} \p^{N} w}_{0}}.
  \end{align*}
  The interpolation inequality \reff{inter} yields
  \begin{align*}
    \inner{\frac{N}{\rho}}^2\norm{ \vpi_{\rho_1, N}
    \p^{N} w}_{2+\frac{j_0-1}{\ell+1}}&\leq \inner{\frac{N}{\rho}}\norm{ \vpi_{\rho_1, N}
    \p^{N} w}_{2+\frac{j_0}{\ell+1}}\\
    &+\inner{\frac{N}{\rho}}^{2\inner{\ell+1}+j_0+1}\norm{ \vpi_{\rho_1, N}
    \p^{N} w}_{0}.
  \end{align*}
  The  above two inequalities, together with \reff{4.01} and \reff{desired},  give
  the desired estimate \reff{4.13} at once.

  \medskip
  {\bf Step 4.} Now we are ready to prove \reff{4.6},
  the estimate on the commutator of $\cl$ with the cut-off
  function $\vpi_{\rho,N}.$ Firstly, one has
  \begin{align*}
      \com{\cl,~ \vpi_{\rho, N}}=&2\inner{\p_{s}\vpi_{\rho, N}}\p_s
      +\inner{\p_{ss}\vpi_{\rho,N}}+2\inner{\p_{t}\vpi_{\rho,N}}\tilde k\p_t\\
      &\quad+\inner{\p_{tt}\vpi_{\rho,N}}\tilde k+\inner{\p_t\vpi_{\rho,N}}\inner{\p_{t}\tilde
      k}.
  \end{align*}
  Observe that
 \begin{align*}
    \norm{\inner{\p_t\vpi_{\rho, N}}\inner{\p_{t}\tilde
    k}\p^N w}_{2+\frac{j_0-1}{\ell+1}}\leq&
    \mathcal{C}_{11}\Big\{\inner{\frac{N}{\rho}}\norm{ \vpi_{\rho_1,N}
    \p^{N} w}_{2+\frac{j_0-1}{\ell+1}}\\
    &\quad +\inner{\frac{N}{\rho}}^{3+\frac{j_0-1}{\ell+1}}\norm{ \vpi_{\rho_1,N}
    \p^{N} w}_{0}\Big\},
 \end{align*}
  hence from \reff{4.01} and \reff{desired}, we have
  \begin{equation*}
    \norm{\inner{\p_t\vpi_{\rho,N}}\inner{\p_{t}\tilde
      k}\p^N w}_{2+\frac{j_0-1}{\ell+1}}\leq \frac{\mathcal{C}_{12} L^{N-3}}{\rho^{(\ell+1)(N-3)}}
    \inner{\frac{N}{\rho}}^{j_0+1}\com{(N-3)}^{\ell+1}.
  \end{equation*}
  Together with \reff{4.11}, \reff{4.12} and
  \reff{4.13}, this yields the desired estimate
  \reff{4.6} at once.

  \medskip
  {\bf Step 5.} In this step we shall deal with the non linear terms, and prove
  \begin{equation}\label{4.14}
    \norm{\vpi_{\rho, N}\p_t\p^{N}\tilde k
    }_{2+\frac{j_0-1}{\ell+1}}\leq
    \frac{\mathcal{C}_{13} L^{N-3}}{\rho^{(\ell+1)(N-3)}}\inner{\frac{N}{\rho}}^{j_0+1}
    \com{(N-3)!}^{\ell+1}.
 \end{equation}

Recall $\norm{\vpi_{\rho, N}\p_t\p^{N}\tilde k
    }_{2+\frac{j_0-1}{\ell+1}}=\sum_{\abs\alpha=N}\norm{\vpi_{\rho, N}\p_t\p^{\alpha}\tilde k
    }_{2+\frac{j_0-1}{\ell+1}}.$  Leibniz's formula gives, for any $\alpha$ with
$\abs\alpha=N,$
  \begin{align*}
   \vpi_{\rho, N}\p_t\p^\alpha\tilde k&=
    \sum\limits_{1\leq\abs\beta\leq\abs\alpha}{\alpha\choose\beta}\vpi_{\rho,N}\big(\p^\beta\tilde
    k_w\big)\inner{\p_t\p^{\alpha-\beta}w}+\vpi_{\rho,N}\tilde
    k_w\p_t\p^{\alpha}w\\
    &=\sum\limits_{5\leq\abs\beta\leq\abs\alpha-4}{\alpha\choose\beta}\vpi_{\rho,N}\big(\p^\beta\tilde
    k_w\big)\inner{\p_t\p^{\alpha-\beta}w}+\vpi_{\rho,N}\tilde
    k_w\p_t\p^{\alpha}w+R_\alpha
    \end{align*}
    with $\tilde k_w(s,t)=k_w(s,w(s,t)))$ and
    \[R_\alpha=\sum\limits_{1\leq\abs\beta\leq4}{\alpha\choose\beta}\vpi_{\rho,N}\big(\p^\beta\tilde
    k_w\big)\inner{\p_t\p^{\alpha-\beta}w}+\sum\limits_{\abs\alpha-3\leq\abs\beta\leq\abs\alpha}
    {\alpha\choose\beta}\vpi_{\rho,N}\big(\p^\beta\tilde
    k_w\big)\inner{\p_t\p^{\alpha-\beta}w}.\]
Since $H^{\kappa}(\rr^2), \kappa>1$ is an algebra,  we have
  \begin{align*}
    &\sum_{\abs\alpha=N}~\sum\limits_{5\leq\abs\beta\leq
    \abs\alpha-4}{\alpha\choose\beta}\norm{\vpi_{\rho,N}\big(\p^\beta\tilde
    k_w\big)\inner{\p_t\p^{\alpha-\beta}w}}_{2+\frac{j_0-1}{\ell+1}}\\
    &\leq \sum_{\abs\alpha=N}~ \sum\limits_{5\leq\abs\beta\leq \abs\alpha-4}{\alpha\choose\beta}
    \norm{\vpi_{\rho_1,\abs\beta}\p^\beta\tilde k_w}_{2+\frac{j_0-1}{\ell+1}}
    \norm{\vpi_{\rho, N}\p_t\p^{\alpha-\beta}w}_{2+\frac{j_0-1}{\ell+1}}\\
    &\leq \sum\limits_{i=5}^{N-4}\frac{N!}{i!(N-i)!}
    \norm{\vpi_{\rho_1,i}\p^i\tilde k_w}_{2+\frac{j_0-1}{\ell+1}}
    \norm{\vpi_{\rho, N}\p^{N-i+1}w}_{2+\frac{j_0-1}{\ell+1}}.
  \end{align*}
We can use \reff{872} in Corollary \ref{0811012}, to get for each
$i$ with $5\leq i\leq m$
  \begin{align*}
    \|\vpi_{\rho_1,i}\p^i\tilde
    k_w\|_{2+\frac{j_0-1}{\ell+1}}&\leq \frac{\mathcal {C}_{14} L^{i-2}}{{\rho_1}^{\inner{\ell+1}(i-3)}}
    \inner{\frac{i}{\rho_1}}^{j_0}\com{\inner{i-3}!}^{\ell+1}\\
    &\leq \frac{\mathcal {C}_{15} L^{i-2}}{{\rho}^{\inner{\ell+1}(i-3)}}
    \inner{\frac{i}{\rho}}^{j_0}\com{\inner{i-3}!}^{\ell+1}.
  \end{align*}
 Observing  $N-i+1\leq N$ for each $i\geq 1$, we use \reff{desired} and the induction assumptions \reff{ind}  and
  \reff{4.01}, to compute
  \begin{eqnarray*}
   &&\norm{\vpi_{\rho,N}\p^{N-i+1}w}_{2+\frac{j_0-1}{\ell+1}}\leq C
   \Big\{\norm{\vpi_{\rho_1,N-i+1}\p^{N-i+1}w}_{2+\frac{j_0-1}{\ell+1}}\\
   &&\indent\indent+\inner{\frac{N}{\rho}}^{2+\frac{j_0-1}{\ell+1}}\norm{\vpi_{\rho_1,N-i+1}\p^{N-i+1}w}_{0}\Big\}\\
   &&\leq
   \norm{\vpi_{\rho_1,N-i+1}\p^{N-i+1}w}_{2+\frac{j_0-1}{\ell+1}}+\frac{\mathcal {C}_{16} L^{N-i-1}}{\rho^{(\ell+1)(N-i-2)}}
  \inner{\frac{N-i+1}{\rho}}^{j_0}\com{(N-i-2)!}^{\ell+1}\\
  &&\leq
   \norm{\vpi_{\rho_1,N-i+1}\p^{N-i+1}w}_{2+\frac{j_0}{\ell+1}}+\frac{\mathcal {C}_{16} L^{N-i-1}}{\rho^{(\ell+1)(N-i-2)}}
  \inner{\frac{N-i+1}{\rho}}^{j_0}\com{(N-i-2)!}^{\ell+1}\\&&\leq
   \frac{\mathcal {C}_{17} L^{N-i-1}}{\rho^{(\ell+1)(N-i-2)}}
  \inner{\frac{N-i+1}{\rho}}^{j_0}\com{(N-i-2)!}^{\ell+1}.
  \end{eqnarray*}
Then
\begin{align*}
 &\sum_{\abs\alpha=N}~\sum\limits_{5\leq\abs\beta\leq \abs\alpha-4}{\alpha\choose\beta}\|\vpi_{\rho,N}\big(\p^\beta\tilde
    k_w\big)\inner{\p_t\p^{\alpha-\beta}w}\|_{2+\frac{j_0-1}{\ell+1}}\\
 &\leq \sum\limits_{5\leq i\leq N-4}\frac{ N!}{ i!\inner{ N- i}!}
    \frac{\mathcal {C}_{15} L^{ i-2}}{{\rho}^{\inner{\ell+1}( i-3)}}
    \inner{\frac{ i}{\rho}}^{j_0}\com{\inner{ i-3}!}^{\ell+1}\\
    &\qquad\qquad \times\frac{\mathcal {C}_{16} L^{ N- i-1}}{{\rho}^{\inner{\ell+1}( N- i-2)}}
    \inner{\frac{ N- i+1}{\rho}}^{j_0}\com{\inner{ N- i-2}!}^{\ell+1}\\
   &\leq\frac{\mathcal {C}_{18}L^{ N-3}}{{\rho}^{\inner{\ell+1}( N-5)}}
   \inner{\frac{ N}{\rho}}^{2j_0}
    \sum\limits_{5\leq i\leq N-3}\frac{ N!}{ i^3\inner{ N- i}^2}
    \com{\inner{ i-3}!}^{\ell}
    \com{\inner{ N- i-2}!}^{\ell}\\&\leq
    \frac{\mathcal {C}_{18} L^{ N-3}}{{\rho}^{\inner{\ell+1}( N-4)}}\inner{\frac{N}{\rho}}^{j_0+1}
    \sum\limits_{5\leq i\leq N-4}\frac{\inner{ N-5}! N^{5+(j_0-1)}}
    { i^3\inner{ N- i}^2}
    \com{\inner{ N-5}!}^{\ell}\\&\leq
    \frac{\mathcal {C}_{19} L^{ N-3}}{{\rho}^{\inner{\ell+1}( N-3)}}\inner{\frac{N}{\rho}}^{j_0+1}
    \com{\inner{ N-3}!}^{\ell+1}
    \sum\limits_{5\leq i\leq N-4}\frac{ N^{4+j_0+1}}
    { N^{2(\ell+1)} i^3\inner{ N- i}^2}\\&\leq
    \frac{\mathcal {C}_{19}L^{ N-3}}{{\rho}^{\inner{\ell+1}( N-3)}}\inner{\frac{N}{\rho}}^{j_0+1}
    \com{\inner{ N-3}!}^{\ell+1}
    \sum\limits_{5\leq i\leq N-4}\frac{ N^{2}}
    { i^3\inner{ N- i}^2}.
  \end{align*}
   Here the last inequality holds since $4+j_0-2(\ell+1)\leq 2.$  Moreover,
   observing that the series $\sum\limits_{5\leq i\leq N-4}\frac{N^2}{
    i^3(N-i)^2}$ is dominated from above by a constant
    independent of $N$, then
    we get
    \begin{align*}
    &\sum_{\abs\alpha=N}~\sum\limits_{5\leq\abs\beta\leq \abs\alpha-4}{\alpha\choose\beta}\|\vpi_{\rho,N}\big(\p^\beta\tilde
    k_w\big)\inner{\p_t\p^{\alpha-\beta}w}\|_{2+\frac{j_0-1}{\ell+1}}\\
    &\leq
    \frac{\mathcal {C}_{20} L^{ N-3}}{{\rho}^{\inner{\ell+1}( N-3)}}\inner{\frac{N}{\rho}}^{j_0+1}
    \com{\inner{ N-3}!}^{\ell+1}.
  \end{align*}
It's a straightforward verification to prove that
    \begin{align*}
    \sum_{\abs\alpha=N} \|R_\alpha\|_{2+\frac{j_0-1}{\ell+1}}\leq
    \frac{\mathcal {C}_{21} L^{ N-3}}{{\rho}^{\inner{\ell+1}( N-3)}}\inner{\frac{N}{\rho}}^{j_0+1}
    \com{\inner{ N-3}!}^{\ell+1}.
  \end{align*}
So we have proved that
\begin{eqnarray}\label{4.15}
   &&\sum_{\abs\alpha=N}~\sum\limits_{1\leq\abs\beta\leq\abs\alpha}{\alpha\choose\beta}\norm{\vpi_{\rho,N}\big(\p^\beta\tilde
    k_w\big)\inner{\p_t\p^{\alpha-\beta}w}}_{2+\frac{j_0-1}{\ell+1}}\\
    &&\leq
    \frac{\mathcal {C}_{22} L^{ N-3}}{{\rho}^{\inner{\ell+1}( N-3)}}\inner{\frac{N}{\rho}}^{j_0+1}
    \com{\inner{ N-3}!}^{\ell+1}\nonumber.
\end{eqnarray}
 Observe $\norm{\vpi_{\rho,
N}\p_t\p^{N}\tilde k
    }_{2+\frac{j_0-1}{\ell+1}}$ is bounded from above  by
    \begin{align*}
    \sum_{\abs\alpha=N}~\sum\limits_{1\leq\abs\beta\leq\abs\alpha}{\alpha\choose\beta}\norm{\vpi_{\rho,N}\big(\p^\beta\tilde
    k_w\big)\inner{\p_t\p^{\alpha-\beta}w}}_{2+\frac{j_0-1}{\ell+1}}
    +\|\vpi_{\rho,N}\tilde k_w\p_t\p^{N}w
    \|_{2+\frac{j_0-1}{\ell+1}}.
  \end{align*}
So to get the desired estimates \reff{4.14} it remains to estimate
the last term above. Direct calculations yield that
  \begin{align*}
    &\|\vpi_{\rho,N}\tilde k_w\p_t\partial^{N}w
    \|_{2+\frac{j_0-1}{\ell+1}}=\|\vpi_{\rho,N}\tilde k_w\p_t\vpi_{\rho_1, N}\partial^{N}w
    \|_{2+\frac{j_0-1}{\ell+1}}\\
    &\leq \mathcal {C}_{23} \Big\{\|\tilde k_w\Lambda^{2+\frac{j_0-1}{\ell+1}}\p_t\vpi_{\rho_1, N}\partial^{N}w
    \|_{0}+\|\com{\tilde k_w,~\Lambda^{2+\frac{j_0-1}{\ell+1}}}\p_t\vpi_{\rho_1, N}\partial^{N}w
    \|_{0}\\
    &\qquad+\inner{\frac{N}{\rho}}^{2+\frac{j_0-1}{\ell+1}}\|\tilde k_w\p_t\vpi_{\rho_1, N}\partial^{N}w
    \|_{0}\Big\}\\
    &\leq \mathcal {C}_{24} \Big\{\|\tilde k_w\Lambda^{2+\frac{j_0-1}{\ell+1}}\p_t\vpi_{\rho_1, N}\partial^{N}w
    \|_{0}+\|\vpi_{\rho_1, N}\partial^{N}w
    \|_{2+\frac{j_0-1}{\ell+1}}\\
    &\quad\quad\quad
    +\inner{\frac{N}{\rho}}^{2+\frac{j_0-1}{\ell+1}}\|\vpi_{\rho_1, N}\partial^{N}w
    \|_{1}\Big\}.\\
    &\leq \mathcal {C}_{25}\Big\{\|\tilde k_w\Lambda^{2+\frac{j_0-1}{\ell+1}}\p_t\vpi_{\rho_1, N}\partial^{N}w
    \|_{0}+\|\vpi_{\rho_1, N}\partial^{N}w
    \|_{2+\frac{j_0-1}{\ell+1}}\\
    &\quad\quad\quad+\inner{\frac{N}{\rho}}^{\frac{\inner{2\ell+j_0+1}^2}{(\ell+1)(j_0+\ell)}}
    \|\vpi_{\rho_1, N}\partial^{N}w \|_{0}\Big\}.
  \end{align*}
  In the last inequality we used the interpolation inequality  \reff{inter}.
  Combining the fact that
  \begin{align*}
   \abs{  k_w(s,w)}\leq
   C\big(\sup\limits_{w\in\rr}\abs{k_{ww}(s,w)}\big)^{\frac 1 2}
   \inner{k(s,w)}^{\frac 1 2},
  \end{align*}
  which can be deduced from the nonnegativity of $k(s,w)$,  we
  obtain
  \begin{align}
    &\|\vpi_{\rho,N}\tilde k_w(s,w)\p_t\partial^{N}w
    \|_{2+\frac{j_0-1}{\ell+1}}\nonumber\\
    &\leq \mathcal {C}_{26} \Big\{\|\tilde k^{\frac{1}{2}}\Lambda^{2+\frac{j_0-1}{\ell+1}}\p_t\vpi_{\rho_1, N}\partial^{N}w
    \|_{0}+\|\vpi_{\rho_1, N}\partial^{N}w
    \|_{2+\frac{j_0}{\ell+1}}\nonumber\\
    &\quad +\inner{\frac{N}{\rho}}^{\frac{\inner{2\ell+j_0+1}^2}{(\ell+1)(j_0+\ell)}}
    \|\vpi_{\rho_1, N}\partial^{N}w\-_{0}\Big\}\label{4.15+1}\\
    &\leq \frac{\mathcal {C}_{27} L^{\abs\alpha-3}}{\rho^{(\ell+1)(\abs\alpha-3)}}\inner{\frac{N}{\rho}}^{j_0+1}
    \com{(\abs\alpha-3)!}^{\inner{\ell+1}},\nonumber
  \end{align}
  the last inequality following from \reff{4.01} and
  \reff{desired}. The proof is thus completed.

   \medskip
   {\bf Step 6.} Now we prepare to prove the inequality
   \reff{4.7}, the estimate on the commutator of $\cl$ with the differential operator
   $\p^\alpha.$ Direct verification yields
     \begin{align*}
        \com{\cl,~\p^\alpha}w=-\sum_{0<\beta\leq\alpha}{\alpha\choose
        \beta}\inner{\p_t\p^{\beta}\tilde k}\inner{\p_t\p^{\alpha-\beta}w}-\sum_{0<\beta\leq\alpha}{\alpha\choose
        \beta}\inner{\p^{\beta}\tilde k}\inner{\p_{tt}\p^{\alpha-\beta}w}.
     \end{align*}
     So
     \begin{equation}
       \sum_{\abs\alpha=N}\norm{\vpi_{\rho,N}\com{\cl,~\p^\alpha}
       w}_{2+\frac{j_0-1}{\ell+1}}\leq
       \cs_1+\cs_2
     \end{equation}
     with $\cs_1,\cs_2$ given by
     \begin{align*}
        \cs_1&=\sum_{\abs\alpha=N}~\sum_{0<\beta\leq\alpha}{\alpha\choose
        \beta}\norm{\vpi_{\rho,N}\inner{\p_t\p^{\beta}\tilde k}\inner{\p_t\p^{\alpha-\beta}w}
        }_{2+\frac{j_0-1}{\ell+1}}
     \end{align*}
      and
      \begin{align*}
        \cs_2&=\sum_{\abs\alpha=N}~\sum_{0<\beta\leq\alpha}{\alpha\choose
        \beta}\norm{\vpi_{\rho,N}\inner{\p^{\beta}\tilde k}\inner{\p_{tt}\p^{\alpha-\beta}w}
        }_{2+\frac{j_0-1}{\ell+1}}.
      \end{align*}
For $\cs_1$, we have treated the term of $\beta=\alpha$ by
\reff{4.14}, and the terms of $0<\beta<\alpha$ can be deduced
similarly to \reff{4.15}; this gives
      \begin{equation*}
        \cs_{1}\leq
        \frac{\mathcal {C}_{28} L^{N-3}}{\rho^{(\ell+1)(N-3)}}\inner{\frac{N}{\rho}}^{j_0+1}\com{(N-3)!}^{\inner{\ell+1}}.
      \end{equation*}
For $\cs_2$, we have treated the term of $|\beta|=1$ by
\reff{4.15+1}, and the terms of $2\leq|\beta|\leq|\alpha|$ can be
deduced similarly to \reff{4.15}; this gives also
      \begin{equation*}
        \cs_{2}\leq
        \frac{\mathcal {C}_{29} L^{N-3}}{\rho^{(\ell+1)(N-3)}}\inner{\frac{N}{\rho}}^{j_0+1}\com{(N-3)!}^{\inner{\ell+1}}.
      \end{equation*}
   This complete the proof of Lemma \ref{lemm2.1}.
\end{proof}

%

\end{document}